\documentclass{article}

\usepackage{amsmath} 
\usepackage{amsthm}
\usepackage{amssymb}
\usepackage{hyperref} 
\usepackage{color}
\usepackage{amstext,dsfont,fancyvrb,float,fontenc,graphicx, subcaption}
\usepackage{mathrsfs}

\newtheorem{theorem}{Theorem}
\newtheorem{lemma}[theorem]{Lemma}
\newtheorem{proposition}[theorem]{Proposition} 
\newtheorem*{corollary}{Corollary} 

\theoremstyle{definition}
\newtheorem*{definition}{Definition} 

\theoremstyle{remark}
\newtheorem*{remark}{Remark}

\usepackage{tikz}
\usetikzlibrary{calc, patterns, arrows.meta, decorations.markings}
\tikzset{%
  line numbers/.store in=\fakelinenos,
  line numbers=50,
  line number shift/.store in=\fakelinenoshift,
  line number shift=5mm,
line number style/.style={text=black},
}

\title{Polygonal Refractive Outer Billiards}
\author{Jaewoo Park\\ University Laboratory High School
}

\begin{document}
\maketitle

\begin{abstract}
Extending recent work on \textit{refractive billiards}, we introduce and study the corresponding \textit{refractive outer billiards} system about a convex polygon. Gutkin and Simányi showed in 1992 that for regular outer billiards, orbits about a certain class of polygons called \textit{quasi-rational} polygons are bounded, and that orbits about \textit{rational} polygons are periodic. Tabachnikov and Culter later proved in 2007 that every outer billiard system about a convex polygon admits a periodic trajectory. We generalize both results to the refractive setting.
\end{abstract}

\section{Introduction}
Let $\Gamma$ be a closed convex curve in the plane. The \textit{outer billiards map} $T$  around $\Gamma$ is defined as follows. Let $E$ be the region outside of $\Gamma$, and let $o \in E$ be a point in the plane. There are two \textit{supporting rays} $L_+(o)$ and $L_-(o)$ from $o$ to $\Gamma$, such that the entirety of $\Gamma$ lies to the right of $L_+$ and to the left of $L_-$. If $L_+$ has a unique intersection with $\Gamma$, then $T$ is a reflection about the point of tangency. If the intersection is \textit{not} unique, then $T$ is undefined. See Figure~\ref{fig:smoothTable}.

\begin{figure}[!htb]
    \centering
\begin{tikzpicture}[scale=2.5]
  \pgfmathsetmacro{\a}{1.0}
  \pgfmathsetmacro{\b}{0.6}

  \pgfmathsetmacro{\ox}{-1.2}
  \pgfmathsetmacro{\oy}{0.5}
  \coordinate (o) at (\ox,\oy);

  \fill[gray!20] (0,0) ellipse [x radius=\a, y radius=\b];
  \draw[pattern=north east lines, pattern color=gray]
        (0,0) ellipse [x radius=\a, y radius=\b];
  \draw[thick] (0,0) ellipse [x radius=\a, y radius=\b];

  \pgfmathsetmacro{\p}{\ox/\a}
  \pgfmathsetmacro{\q}{\oy/\b}

  \pgfmathsetmacro{\R}{sqrt(\p*\p + \q*\q)}

  \pgfmathsetmacro{\phi}{atan2(\q,\p)}

  \pgfmathsetmacro{\alpha}{acos(1/\R)}

  \pgfmathsetmacro{\tOne}{\phi + \alpha}
  \pgfmathsetmacro{\tTwo}{\phi - \alpha}

  \pgfmathsetmacro{\xone}{\a*cos(\tOne)}
  \pgfmathsetmacro{\yone}{\b*sin(\tOne)}
  \pgfmathsetmacro{\xtwo}{\a*cos(\tTwo)}
  \pgfmathsetmacro{\ytwo}{\b*sin(\tTwo)}

  \pgfmathsetmacro{\Ax}{ifthenelse(\yone > \ytwo, \xone, \xtwo)}
  \pgfmathsetmacro{\Ay}{ifthenelse(\yone > \ytwo, \yone, \ytwo)}
  \pgfmathsetmacro{\Apx}{ifthenelse(\yone > \ytwo, \xtwo, \xone)}
  \pgfmathsetmacro{\Apy}{ifthenelse(\yone > \ytwo, \ytwo, \yone)}

  \coordinate (A)  at (\Ax,\Ay);
  \coordinate (A') at (\Apx,\Apy);

  \coordinate (To)   at ($ 2*(A)  - (o) $);
  \coordinate (Tinv) at ($ 2*(A') - (o) $);

  \draw[thick] (o) -- (To);
  \draw[thick] (o) -- (Tinv);

  \fill (o)    circle (0.015) node[above left]  {$o$};
  \fill (To)   circle (0.015) node[above right] {$T(o)$};
  \fill (Tinv) circle (0.015) node[below right] {$T^{-1}(o)$};
\end{tikzpicture}
\caption{The outer billiards map $T$.}
    \label{fig:smoothTable}
\end{figure}

Moser first popularized outer billiards as a toy model for the solar system in \cite{Moser}. Problems about the stability and structure of orbits are of great interest: for instance, Moser-Douady showed that if $\Gamma$ is $C^7$-curved and has positive curvature, then all orbits are bounded~\cite{Moser2, Douady}. Dolgopyat and Fayad later proved that outer billiards around a semicircle has unbounded orbits~\cite{Dolgopyat}.

Our interest lies in the case where $\Gamma$ is a convex polygon; therefore, we will now use $P$ instead of $\Gamma$. The study of orbits about polygons is rich and complex. Gutkin-Simányi~\cite{Gutkin}, Kolodziej~\cite{Kolodziej}, and Vivaldi-Shaidenko~\cite{Vivaldi} independently proved that all orbits about a class of polygons called \textit{quasi-rational polygons} are bounded, and that all orbits about \textit{rational polygons} are periodic\footnote{Gutkin and Sim\'anyi remark in \cite{Gutkin} that they were unable to verify the arguments of \cite{Kolodziej, Vivaldi}.}. All rational polygons are quasi-rational; for instance, all regular $n$-gons are quasi-rational, but only the regular 3, 4 and 6-gons are rational. 
Tabachnikov and Culter~\cite{Culter} also showed that every polygon has periodic orbits and that the points that induce periodic orbits have a positive measure in the plane. For other notable results, see \cite{Genin, Schwarz, Boyland}.

Recently, in \cite{Arnold}, we generalized regular (Birkhoff) billiards to the \textit{refractive billiards} system. In this paper, we build on our paper, introducing the \textit{refractive outer billiards} system. We generalize the main results of \cite{Gutkin} and \cite{Culter}.

The refractive outer billiard map $T$ around a convex $n$-gon $P$ is defined as follows. Throughout this paper, we follow Gutkin's notation. Let $\lambda_1, \lambda_2, \dots, \lambda_k$ be a sequence of positive \textit{refractive indices} (viewed modulo $k$) that satisfy 
\[
\prod_{i=1}^{k} \lambda_i = 1.
\]
Given a point $o$ outside $P$, the map $T$ is well-defined (with respect to all choices of refractive indices) when $o$ does not lie on a line through a side of $P$. If this is the case, let $A_1$ denote the intersection of $L_+(o)$ with $P$. Then $T(o)$ is the reflection of $o$ about $A_1$, followed by a scaling of $\lambda_1$ about $A_1$. Generally, $T^m(o)$ is the reflection of $T^{m-1}(o)$ about $A_m$, followed by a scaling of $\lambda_m$ about $A_m$. Note that the $m$-th step carries the index $\lambda_m$. All statements
about orbits and periodicity refer to the bi-infinite sequence $(T^m(o))_{m \in \mathbb{Z}}$ together with its indices. See Figure~\ref{fig: refractivedualbilliards}.

\begin{figure}[!htb]
\centering
\begin{tikzpicture}[scale=3.5]
  \coordinate (A)  at (0,0);
  \coordinate (B)  at (1,0);
  \coordinate (C)  at (0.8,0.9);
  \coordinate (D)  at (0.15,0.8);
  \coordinate (P0) at (0.4,-0.35);
  \pgfmathsetmacro{\lA}{3/2}   
  \pgfmathsetmacro{\lD}{2/3}   
  \pgfmathsetmacro{\lC}{2}     
  \pgfmathsetmacro{\lB}{1/2}   

  \coordinate (P1) at ($ {(1+\lA)}*(A) + {(-\lA)}*(P0) $);
  \coordinate (P2) at ($ {(1+\lD)}*(D) + {(-\lD)}*(P1) $);
  \coordinate (P3) at ($ {(1+\lC)}*(C) + {(-\lC)}*(P2) $);
  \coordinate (P4) at ($ {(1+\lB)}*(B) + {(-\lB)}*(P3) $);

  \fill[gray!20] (A) -- (B) -- (C) -- (D) -- cycle;
  \draw[pattern=north east lines, pattern color=gray]
        (A) -- (B) -- (C) -- (D) -- cycle;
  \draw[thick] (A) -- (B) -- (C) -- (D) -- cycle;

  \draw[thick]
    (P0) -- (A) -- (P1) -- (D) -- (P2) -- (C) -- (P3) -- (B) -- (P4);

  \fill (P0) circle (0.015) node[below right] {$o$};
  \fill (P1) circle (0.015) node[left]        {$T(o)$};
  \fill (P2) circle (0.015) node[above left]  {$T^2(o)$};
  \fill (P3) circle (0.015) node[right] {$T^3(o)$};
  \fill (P4) circle (0.015) node[below] {$T^4(o)$};
  \fill (A) circle (0.015) node[below left] {$A_1$};
  \fill (B) circle (0.015) node[below right] {$A_4$};
  \fill (C) circle (0.015) node[above right] {$A_3$};
  \fill (D) circle (0.015) node[above left] {$A_2$};
\end{tikzpicture}
\caption{Refractive outer billiards with $\lambda_1 = 3/2,\,\, \lambda_2 = 2/3,\,\, \lambda_3 = 2,\,\, \lambda_4 = 1/2.$}
\label{fig: refractivedualbilliards}
\end{figure}

Note that the inverse map $T^{-1}$ is also defined in a similar way. Let $A_0$ denote the intersection of $L_-(o)$ with $P$. Then $T^{-1}(o)$ is the reflection of $o$ about $A_0$, followed by a scaling of $1/\lambda_0$ about $A_0$, and the general case is defined similarly (indices modulo $k$, so $\lambda_0 = \lambda_k$). 

Let $P$ be a convex polygon, $T$ the refractive outer billiard map, and $\lambda_1, \dots, \lambda_k$ refractive indices. Then, our main results are as follows.
\begin{theorem}
    If $P$ is \textit{quasi-rational}, then every orbit of $T$ is bounded.
\end{theorem}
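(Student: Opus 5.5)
We follow the Gutkin--Simányi strategy of a "necklace" of invariant polygonal curves at infinity, adapted to the scalings. The key observation is that far from $P$ the refractive map is, up to bounded error, a composition of point reflections and homotheties whose total scaling over $k$ steps is $\prod \lambda_i = 1$.

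\textbf{Step 1: dynamics at infinity.} Fix a large scale $R$ and consider the $k$-fold iterate $T^k$, or rather the $kn'$-fold iterate, where $n'$ is the number of steps needed for a far-away orbit to circle $P$ once. For $|o|$ large, the supporting vertex $A_m$ of $T^{m-1}(o)$ depends only on the direction of $T^{m-1}(o)$ from $P$. Each step $x\mapsto A_m-\lambda_m(x-A_m)$ is affine with linear part $-\lambda_m \mathrm{Id}$. Hence the composition of $N$ steps, with $N$ a common multiple of $k$ and of the period of the vertex sequence, has linear part $(-1)^N\prod\lambda = \pm\mathrm{Id}$. Choosing $N$ even, $T^N$ is a translation on each region where the vertex sequence is constant.

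\textbf{Step 2: the relevant regions are cones.} The region where the vertex sequence is constant is a union of cones from (near) $P$, whose boundaries are lines through sides of $P$, pulled back under the affine branches. The scalings are by positive constants, and the sequence $\lambda_m$ is periodic with product $1$. So over one full revolution the direction of $T^m(o)$ rotates through the same angular sectors as in the classical case, merely with rescaled radii. I would therefore introduce the rescaled point $\tilde o_m = \Lambda_m^{-1}(T^m(o))$, where $\Lambda_m=\prod_{i\le m}\lambda_i$ (bounded above and below, since $\Lambda_{m+k}=\Lambda_m$). Up to bounded additive errors, $\tilde o_m$ follows the classical outer billiard about $P$ at infinity.

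\textbf{Step 3: quasi-rationality.} For a quasi-rational $P$, Gutkin--Simányi construct a family of invariant "necklaces": polygonal annuli escaping to infinity, each of which the far-out dynamics cannot cross. The translation vectors of the first-return map are integer combinations of the vectors $2(A_j - A_i)$, and quasi-rationality makes the relevant ratios rational so that the return map is periodic on each necklace. In our setting the translation vectors of $T^N$ are $\sum_m c_m (A_{m}-A_{m'})$, with coefficients built from products of the $\lambda$'s. I would verify that the return map of $T^N$ to a fixed transversal ray still consists of interval exchanges/translations that preserve a sequence of nested star-shaped polygons $Q_j$ with $Q_j\to\infty$. Each orbit starting inside $Q_j$ then stays inside $Q_{j+1}$, and boundedness follows.

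\textbf{Main obstacle.} The hardest step is Step 3: showing that the $\lambda$-weighted translation vectors still satisfy the quasi-rational commensurability. The new coefficients are combinations of partial products of the $\lambda_i$, not integers, so the classical argument does not transfer directly. My first attempt would be to average over a full period $k$ of indices and over the two-step "reflection pairs" to show that the total displacement after one revolution is a $\Lambda$-weighted version of the classical vector. Because only the product $\prod\lambda_i=1$ enters the radial drift, this vector should be a positive multiple of the classical one along each necklace, with the multiple independent of the necklace. If that fails, I would instead build the invariant curves directly as level sets of a piecewise-linear function $F$ homogeneous of degree one. I would choose $F$ so that $F(T^N o) = F(o)$ on each cone, which reduces the problem to a finite linear system whose solvability would be exactly the quasi-rationality condition.
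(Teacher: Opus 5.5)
There is a genuine gap. The step that carries the whole theorem, Step~3, is only announced (``I would verify that the return map \dots\ preserve[s] a sequence of nested star-shaped polygons''). By your own account, its key ingredient is left as an expectation, with an unsubstantiated fallback.

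The reduction in Step~2 is also false as stated. Write $\mu_m=\lambda_1\cdots\lambda_m$ (your $\Lambda_m$). Then the rescaled points satisfy
\[
\tilde o_m=-\tilde o_{m-1}+\bigl(\mu_{m-1}^{-1}+\mu_m^{-1}\bigr)A_m ,
\]
so each step is a point reflection about $\tfrac12(\mu_{m-1}^{-1}+\mu_m^{-1})A_m$, not about $A_m$. Inside one cone, $A_m$ alternates between the same two vertices $v,w$. There a block of $2k$ steps moves $\tilde o$ by $2\Lambda(w-v)$, where $\Lambda_i=\mu_i^{-1}$ and $\Lambda=1+\Lambda_1+\dots+\Lambda_{k-1}$. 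The classical map moves by $2k(w-v)$ over the same block. Since $\Lambda\neq k$ in general (e.g.\ $k=2$, $\lambda_1=2$ gives $\Lambda=3/2$), the discrepancy grows linearly with the distance to $P$.

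What survives is only that the direction of motion in each cone is unchanged. So the orbit shadows the same necklace polygon up to a bounded error per revolution. But such errors can accumulate over infinitely many revolutions, which is exactly the question. Moreover, the transitions at the rays depend on the scale and orientation of the polygon at the moment of crossing. So nothing can simply be imported from the classical dynamics at infinity.

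The missing idea is an exact equivariance of each next-cone map, not an averaged drift over a revolution. The paper proceeds as follows.
\begin{itemize}
\item It passes to the refractive necklace and splits the heads on $R_j$ by scale index and orientation into the truncated strips $H_{i,j}^{\pm}$.
\item Shifting a head by $2\Lambda\vec d_j$ along $R_j$ inserts exactly one extra block of $2k$ necklace steps. Such a block restores scale and orientation, so the head correction at the crossing is the same. It moves the head by $(\Lambda_{i+m+1}+\dots+\Lambda_{i+m+2k})\vec a_j=2\Lambda\vec a_j$. This gives $f_j(A+2\Lambda\vec d_j)=f_j(A)+2\Lambda\vec b_j$.
\item Because the factor $2\Lambda$ is the same for every cone, the refractive indices drop out of the commensurability question entirely. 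Using $r_j\vec b_j=r_{j+1}\vec d_{j+1}$ and the classical quasi-rationality $r_j\propto n_j\in\mathbb N$, the shifts chain around the loop to $F(x+2n_1\Lambda\vec d_1)=F(x)+2n_1\Lambda\vec d_1$. No new condition on the $\lambda_i$ arises.
\item Boundedness of $F$-orbits then follows from Gutkin--Sim\'anyi's theorem for invertible quasi-periodic first-return maps.
\item Boundedness of $T$-orbits follows from the orbit--necklace correspondence, whose similarity ratios $\mu_m$ take finitely many values.
\end{itemize}

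Your guess that the effective displacement is a cone-independent multiple of the classical one points in this direction. However, it has to be an exact statement about each $f_j$, not about the total displacement over a revolution. Your fallback, an exactly invariant homogeneous piecewise-linear function, is far stronger than what is needed, and nothing in the proposal indicates such a function exists.
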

\begin{theorem}
    If $P$ is \textit{rational} and each of $\lambda_1, \dots, \lambda_k$ is rational, then every orbit of $T$ is periodic. 
\end{theorem}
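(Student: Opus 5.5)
Building on the first theorem, every orbit is bounded, so the plan is to show that a bounded orbit of a rational polygon with rational indices lies in a discrete set, and then that the resulting map on a finite set is invertible.

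\emph{Rational structure.} Recall that $P$ rational means, after an affine change of coordinates (which commutes with reflections and homotheties about points), all vertices of $P$ lie in a lattice, say $\mathbb{Z}^2$. Write the $m$-th step as $T^m(o) = (1+\lambda_m)A_m - \lambda_m T^{m-1}(o)$. Unwinding from $o$ gives
\[
T^m(o) = (-1)^m\Lambda_m\, o + \sum_{j=1}^{m} c_{j,m} A_j,
\]
where $\Lambda_m=\lambda_1\cdots\lambda_m$ and each $c_{j,m}$ is a product of rationals built from the $\lambda_i$. Every $\lambda_i$ is rational, so all these coefficients lie in $\frac{1}{D}\mathbb{Z}$ for a uniform denominator $D$. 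Because indices repeat with period $k$ and $\prod\lambda_i=1$, $\Lambda_m$ takes only $k$ values. The sum, however, accumulates denominators as $m$ grows, so instead I would track the index phase $m \bmod k$ together with a point in a fixed lattice.

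\emph{Invariant lattice.} Let $q$ be a common denominator of the $\lambda_i$. I expect the main obstacle to be finding a lattice $L$ with $\lambda_i L + (1+\lambda_i)\mathbb{Z}^2 \subseteq L$ for every $i$. Multiplying by a non-integer rational does not preserve $\mathbb{Z}^2$, so no such $L$ exists directly. The remedy is to consider the first return over a full period of $k$ steps. The composite $T^k$ restricted to a region is $x \mapsto \pm x + v$, because the linear part is $\prod(-\lambda_i) = \pm1$. The translation part $v = \sum_j c_j A_j$ has coefficients in a fixed $\frac{1}{N}\mathbb{Z}$ with $N$ depending only on the $\lambda$'s, since it is a fixed polynomial in $\lambda_1,\dots,\lambda_k$ applied to lattice points. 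Hence $v \in \frac1N\mathbb{Z}^2$. So $T^k$ is a piecewise isometry (a point reflection or translation) whose translation vectors lie in $\frac1N\mathbb{Z}^2$. Consequently the $T^k$-orbit of $o$ lies in $\pm o + \frac1N\mathbb{Z}^2$, which is a union of two cosets of a lattice and therefore discrete.

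\emph{Finiteness and periodicity.} The orbit is bounded, so the $T^k$-orbit of $o$ is a bounded subset of a discrete set and hence finite. Thus $T^{ka}(o)=T^{kb}(o)$ for some $a<b$. At those two times the index phase is the same, namely $0 \bmod k$. Since $T$ is invertible wherever it is defined (with the same phase bookkeeping), we get $T^{k(b-a)}(o)=o$ with matching index phase. The full bi-infinite sequence, indices included, is then periodic. Points whose orbits hit lines through sides are excluded, as in the definition of $T$.
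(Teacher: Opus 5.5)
Your argument is correct and is essentially the paper's proof. Both use boundedness from the quasi-rational theorem (you should state explicitly that rational implies quasi-rational), discreteness coming from the rationality of $P$ and the $\lambda_i$, pigeonhole, and invertibility with matching index phase. The only difference is that you place the orbit points of $T^k$ in $\pm o + \tfrac{1}{N}\mathbb{Z}^2$, whereas the paper places the necklace heads of the first-return map $F$ in a lattice; your version spells out the lattice claim that the paper only asserts.

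Your remark that denominators accumulate in $\sum_j c_{j,m}A_j$ is unfounded. Full cycles of the $\lambda_i$ multiply to $1$, so each $c_{j,m}$ has a uniformly bounded denominator, and the whole orbit already lies in finitely many lattice cosets. The detour through $T^k$ is therefore unnecessary, but it is harmless.
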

\begin{theorem}
    Given any $P$ and $\lambda_1, \dots, \lambda_k,$ there exists a periodic orbit. 
\end{theorem}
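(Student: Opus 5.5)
We follow the strategy of Tabachnikov--Culter for ordinary outer billiards and look for periodic orbits far from $P$, where the combinatorics become very regular. The composition of maps about vertices has a simple affine form. One step about vertex $A$ with index $\lambda$ is the map $x\mapsto (1+\lambda)A-\lambda x$. A composition of $N$ such steps about vertices $A_{i_1},\dots,A_{i_N}$ with indices $\lambda_1,\dots,\lambda_N$ is therefore
\[
x\mapsto (-1)^N\Big(\prod_{j=1}^N\lambda_j\Big)x + c,
\]
where $c$ depends only on the vertex sequence and the indices.

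\textbf{Step 1: choose an affine map whose linear part is the identity.} Take $N$ to be an even multiple of $k$, say $N=2k$ or a common multiple of $2$, $k$ and the number of steps in one combinatorial period (see Step 2). Then the linear part is $(\prod\lambda_i)^{N/k}=1$ and the sign is $+1$, so the composition is $x\mapsto x+c$. If we can arrange a fixed combinatorial itinerary with $c=0$ on an open region, every point of that region is periodic.

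\textbf{Step 2: analyse the far field.} Far from $P$, the orbit of ordinary outer billiards circles $P$, and two steps approximate translation by twice an edge vector. In the refractive case a single step scales distances from $P$ by $\lambda$. This rescaling affects only the moduli, not the directions, so over $k$ steps the distance is preserved up to bounded factors.

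The more robust route is the following. Pick a vertex sequence $A_{i_1},\dots,A_{i_N}$ together with indices and solve the fixed-point equation $x=F(x)$ directly. Since $F$ has identity linear part this is impossible in general, so instead take $N$ odd times some multiple, where the linear part is $-\Lambda$ with $\Lambda=\prod\lambda_j>0$. Then $F(x)=-\Lambda x+c$ has a unique fixed point $x^*=c/(1+\Lambda)$.

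The orbit of $x^*$ under the individual steps is periodic of period $N$ provided that, at each step, the actual vertex hit by $L_+$ is the prescribed one. The index cycle must also return, so $N$ must be a multiple of $k$, and we need $\Lambda$ and the parity of $N$ to be compatible. If $k$ is odd, take $N=k$, giving $\Lambda=1$ and $F(x)=-x+c$. If $k$ is even, take $N=k\cdot m$ with the itinerary chosen so that $N$ is odd, which is impossible. In that case we instead consider the map $F^2$, or use a linear part $+1$ and seek a period on which the translation vector vanishes.

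\textbf{Step 3: handle the case where the linear part is the identity.} Use a symmetric itinerary. Choose the vertex sequence so that the translation $c$ is a sum over edge vectors of $P$ weighted by partial products of the indices. Choose the itinerary to go around $P$ once and then back so that the weighted sum of edges telescopes to zero, mirroring the fact that $\sum$ of edges of a closed polygon is $0$. Alternatively, sandwich a region between orbits that return with the same itinerary.

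Concretely, the first thing to try is a period-$2k$ orbit near a single vertex-pair configuration, namely the "Fagnano-like" orbit alternating between two vertices $A,B$ adjacent to a side. Along this orbit the step composition is $x\mapsto (1+\lambda_j)A_j-\lambda_j x$, and the fixed point of the $2k$-fold composition can be computed explicitly.

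\textbf{Step 4 (main obstacle): verify admissibility.} We must show that the computed fixed point actually has the prescribed vertex as its supporting point at every step. That is, each iterate must lie in the correct angular sector, namely the cone bounded by the extensions of the two sides at the vertex. I expect this to be the hard part. I would try to make the iterates lie in the open sector so that admissibility is an open condition, and then use continuity in the indices or the degree of freedom in choosing the itinerary to ensure the sector conditions hold. Openness of the sector conditions also yields a positive-measure family of nearby periodic points when the linear part is the identity.
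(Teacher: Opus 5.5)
\textbf{There is a genuine gap: the proposal never produces an itinerary that is both admissible and closes up.} Your Step~1 is sound. With $N$ an even multiple of $k$ the $N$-step composition is a translation $x\mapsto x+c$, and an admissible itinerary with $c=0$ on an open set gives a positive-measure family of periodic points; the paper gets positive measure the same way at the end. But the deferred Step~4 is in fact the whole theorem, and the concrete candidates you offer do not work.
\begin{itemize}
\item The odd-period route seeks the isolated fixed point of $x\mapsto-\mu x+c$. There is no open condition to exploit, and such orbits need not exist. For the unit square with $k=1$ the fixed point is a lattice point, and no lattice point is strongly regular: for $a,b\ge 2$ one gets $T^2(a,b)=(a-2,b+2)$ as long as the orbit is defined, so the first coordinate reaches $\{0,1\}$, i.e.\ a side line; the other corner regions follow by symmetry.
\item An itinerary that goes around $P$ ``once and then back'' is not realizable. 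Every step of $T$ rotates the point about an interior point of $P$ by an angle in $(0,\pi)$, always in the same sense.
\item Alternating between two vertices $A,B$ can never close up. Put $A$ at the origin; a step about $A$ with index $\lambda$ followed by a step about $B$ with index $\lambda'$ is $x\mapsto\lambda\lambda'x+(1+\lambda')B$. Compositions of such maps keep a positive coefficient on $B$, so $2k$ steps give $x\mapsto x+\gamma B$ with $\gamma>0$ (and $\gamma<0$ if you start at $B$).
\item Continuity in the indices can only help with the open sector conditions. It does nothing for the closed condition $c=0$, which needs a structural cancellation.
\end{itemize}

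\textbf{The missing idea is the far-field structure the paper exploits.}
\begin{itemize}
\item \emph{Drift within a cone.} While the necklace stays inside one cone $C_j$, consecutive tangency points differ by $\Lambda_\ell\vec a_j$. So after $2k$ steps the shape is restored and the polygon has been translated by $2\Lambda\vec a_j$, with $\Lambda=1+\Lambda_1+\dots+\Lambda_{k-1}$. Your two-vertex alternation is exactly this regime, and it is a drift, not a closed orbit.
\item \emph{Cancellation by central symmetry.} The paper takes the itinerary that spends exactly $2kp_j$ steps in each cone $C_j$, $j=1,\dots,2n$. The total translation $2\Lambda\sum_j p_j\vec a_j$ then vanishes automatically, because $\vec a_{j+n}=-\vec a_j$ and $p_{j+n}=p_j$. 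This symmetry of the necklace vectors, not a there-and-back itinerary, is what kills $c$.
\item \emph{Admissibility via Diophantine approximation.} Using the parallelogram-containment lemma and the $\varepsilon_j$-ball proposition, admissibility reduces to one requirement: the closed polygon with sides $2\Lambda p_j\vec a_j$ must have its $j$-th vertex within a fixed $\varepsilon$ of $2\Lambda p_j\vec d_j\in R_j$. Since $\vec d_{j+1}=(r_j/r_{j+1})\vec b_j$, this forces $\bigl|p_j-\tfrac{r_j}{r_{j+1}}p_{j+1}\bigr|$ to be small in absolute terms while the $p_j$ are large. That is precisely what the simultaneous Diophantine approximation $|qr_j-p_j|<\delta$ of Tabachnikov's lemma supplies.
\end{itemize}
Without the cone-wise drift, the cancellation by central symmetry, and the Diophantine choice of the $p_j$, your Step~4 has nothing to work with.
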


In Section~\ref{sec:basic}, we review Gutkin's construction for outer billiards. We define the necklace map, the cone and ray construction, the necklace polygon, and define what it means for a polygon to be rational and quasi-rational.

In Section~\ref{sec:gutkin}, we prove the generalizations of~\cite{Gutkin} for refractive outer billiards. Using the \textit{refractive necklace map}, we establish that the orbits of the \textit{first return map} for the system is bounded if $P$ is quasi-rational. We show that if $P$ is rational with each refraction coefficient rational, then the possible configurations of $P$ are finite, which implies periodicity.

In Section~\ref{sec:culter}, we prove the generalization of~\cite{Culter} for refractive outer billiards. We use the fact that the refractive necklace map roughly follows Gutkin's necklace polygon.

In Appendix~\ref{sec:motivation}, we present the motivation behind refractive outer billiards, highlighting how the projective duality between billiards and outer billiards extends to the duality between refractive billiards and refractive outer billiards.

\section{Basic Definitions}\label{sec:basic}

In this section, we consider \textit{regular} outer billiards. We summarize and adapt the constructions of Gutkin~\cite{Gutkin} for completeness. Define $\sigma_\ell$ for $\ell\in\mathbb{Z}^+$ to be the set of points in the plane where $T^\ell(o)$ or $T^{-\ell}(o)$ is undefined. For instance, the union of all lines through the sides of $P$ is $\sigma_1$. Inductively, we see that each $\sigma_\ell$ is a finite union of straight lines, so that the set $\sigma = \cup_{n=1}^{\infty}\sigma_n$ is a countable union of straight lines. It follows that $\sigma$ is a set of zero measure in the plane. 

We call the set of points that lie outside of $P$ and are not in $\sigma$ the \textit{strongly regular points} about $P$. For a strongly regular point, $T^\ell$ is well-defined for all integer $\ell.$ From now on, all points that we consider will be strongly regular. For all arguments, the exclusion of a set of measure zero does not impact the logic. 

We now introduce the necklace map, based on a key idea: we reflect the polygons instead of the points. Take $P = P_0$ and a strongly regular point $o.$ Call the left tangency point $A_1$ the \textit{head}, and the right tangency point $A_0$ the \textit{tail}. We then reflect $P_0$ about $A_1$ to get $P_1.$ More generally, a polygon $P_\ell$ has head $A_{\ell+1}$ and tail $A_\ell.$ Reflecting $P_\ell$ about its head gives $P_{\ell+1},$ and reflecting about its tail gives $P_{\ell-1}.$ Given $P_0,$ we can continue this process infinitely in both directions (since $o$ is strongly regular) to obtain a sequence $\{\dots, P_{-1}, P_0, P_1, \dots\}$ called the \textit{necklace of $P$ about $o$.} The \textit{necklace map} sends $P_\ell$ to $P_{\ell+1}.$

We can temporarily forget the outer billiards map  and investigate the necklace instead, due to the following theorem. See Figure \ref{fig: necklace}.
\vspace{0.3cm}

\begin{figure}[!htb]
    \centering
    \begin{tikzpicture}[scale=0.9]
  \coordinate (P0V1) at (0,0);
  \coordinate (P0V2) at (1,0);
  \coordinate (P0V3) at (1,2);
  \coordinate (P0V4) at (-0.5,0.7);

  \coordinate (A0) at (0,0);  
  \coordinate (A1) at (1,2);  
  \coordinate (A2) at (1,4);
  \coordinate (A3) at (1,6);
  \coordinate (O) at (-5,3);
  
  \draw[thick]
    (P0V1) -- (P0V2) -- (P0V3) -- (P0V4) -- cycle;
    \draw[pattern=north east lines, pattern color=gray]
        (P0V1) -- (P0V2) -- (P0V3) -- (P0V4) -- cycle;

  \coordinate (P0mid) at (0.45,0.75);

  \coordinate (P1V1) at ($2*(A1) - (P0V1)$);
  \coordinate (P1V2) at ($2*(A1) - (P0V2)$);
  \coordinate (P1V3) at ($2*(A1) - (P0V3)$);
  \coordinate (P1V4) at ($2*(A1) - (P0V4)$);

  \draw[thick]
    (P1V1) -- (P1V2) -- (P1V3) -- (P1V4) -- cycle;
    \draw[pattern=north east lines, pattern color=gray]
        (P1V1) -- (P1V2) -- (P1V3) -- (P1V4) -- cycle;
  \coordinate (P1mid) at (1.55, 3.25);

  \coordinate (R2) at (1,4);

  \coordinate (P2V1) at ($2*(R2) - (P1V1)$);
  \coordinate (P2V2) at ($2*(R2) - (P1V2)$);
  \coordinate (P2V3) at ($2*(R2) - (P1V3)$);
  \coordinate (P2V4) at ($2*(R2) - (P1V4)$);
  \coordinate (P-1mid) at (-0.4,-0.8);
  \draw[thick]
    (P2V1) -- (P2V2) -- (P2V3) -- (P2V4) -- cycle;
    \draw[pattern=north east lines, pattern color=gray]
        (P2V1) -- (P2V2) -- (P2V3) -- (P2V4) -- cycle;
  \coordinate (P2mid) at (0.45, 4.75);

  \node at (P0mid) {$P_0$};
  \node at (P1mid) {$P_1$};
  \node at (P2mid) {$P_3$};
  \node at (P-1mid) {$P_{-1}$};
  \coordinate (P-1V1) at (-1,0);
  \coordinate (P-1V2) at (0.5, -0.7);
  \coordinate (P-1V3) at (-1,-2);
  
  \draw[thick]
    (A0) -- (P-1V1) -- (P-1V3) -- (P-1V2) -- cycle;
    \draw[pattern=north east lines, pattern color=gray]
        (A0) -- (P-1V1) -- (P-1V3) -- (P-1V2) -- cycle;

  \fill (A0) circle (0.05)
      node[below right, xshift=10pt, yshift=3pt] {$A_0$};
  \fill (A1) circle (0.05) node[below right] {$A_1$};
  \fill (A2) circle (0.05) node[below left] {$A_2$};
  \fill (A3) circle (0.05) node[below right] {$A_3$};
  \fill (P-1V3) circle (0.05) node[left] {$A_{-1}$};
  \fill (O) circle (0.05) node[left] {$o$};
  
  \draw[thick, shorten >=0pt, shorten <=-0.5cm] (A3) -- (O);
\draw[thick, shorten >=0pt, shorten <=-0.5cm] (A2) -- (O);
\draw[thick, shorten >=0pt, shorten <=-0.5cm] (A1) -- (O);
\draw[thick, shorten >=0pt, shorten <=-0.5cm] (A0) -- (O);
\draw[thick, shorten >=0pt, shorten <=-0.5cm] (P-1V3) -- (O);

\end{tikzpicture}
    \caption{Part of the necklace of $P=P_0$ about $o$.}
    \label{fig: necklace}
\end{figure}

\begin{theorem}[Gutkin \protect{\cite[pp.~434--435]{Gutkin}}]
The necklace of a convex polygon $P$ about $o$ is defined simultaneously with the orbit $\{T^\ell(o) \mid \ell \in \mathbb{Z}\}.$ Specifically, we have that for $\ell \in \mathbb{Z}^+,$
\begin{itemize}
    \item \(T^\ell(o)=(r_1 \circ \dots \circ r_\ell)(o)\),
    \item \(T^{-\ell}(o)=(r_{-1}\circ \dots \circ r_{-\ell})(o)\),
\end{itemize}
in the sense that the relative position of $P$ to $T^\ell(o)$ is the same as the relative position of $P_\ell$ to $o$. Here, $r_i$ denotes Euclidean reflection about $A_i.$

Moreover, the orbit $\{T^\ell(o) \mid \ell \in \mathbb{Z}\}$ is bounded if and only if the necklace is bounded, and periodic if and only if the necklace is periodic. 
\end{theorem}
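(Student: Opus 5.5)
The plan is to prove the identity by conjugation, using induction on $\ell$. The key observation is that the outer billiard map is equivariant under point reflections. If $r$ is the reflection about a point $A$, then the supporting rays from $r(o)$ to $r(P)$ are the images under $r$ of the supporting rays from $o$ to $P$. A point reflection is a rotation by $\pi$, so it preserves orientation and therefore preserves the left/right distinction. Hence the head of $r(P)$ as seen from $r(o)$ is $r$ of the head of $P$ as seen from $o$. This gives
\[
T_{r(P)} = r\circ T_P\circ r^{-1}.
\]
The statement "the relative position of $P$ to $T^\ell(o)$ equals that of $P_\ell$ to $o$" means precisely that there is an isometry $g_\ell=r_1\circ\cdots\circ r_\ell$ (a composition of point reflections, hence a translation or a point reflection) with $g_\ell(P_\ell)=P$ and $g_\ell(o)=T^\ell(o)$. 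I will prove this by induction, together with the analogous statement for negative indices.

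\emph{Base case, $\ell=1$.} Here $P_1=r_1(P_0)$ and $T(o)=r_1(o)$, with $r_1^{-1}=r_1$. So $r_1$ sends the pair $(P_1,o)$ to $(P_0,T(o))$.

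\emph{Inductive step.} Assume $g_\ell(P_\ell)=P$ and $g_\ell(o)=T^\ell(o)$. The next step of the necklace uses the head $A_{\ell+1}$ of $P_\ell$ as seen from $o$. By equivariance, $g_\ell(A_{\ell+1})$ is the head of $P$ as seen from $T^\ell(o)$, and this is exactly the center of the next reflection of the orbit. Therefore
\[
T^{\ell+1}(o)=r_{g_\ell(A_{\ell+1})}\bigl(g_\ell(o)\bigr)=g_\ell\bigl(r_{\ell+1}(o)\bigr),
\]
using $r_{g(A)}=g\,r_A\,g^{-1}$. Since $P_{\ell+1}=r_{\ell+1}(P_\ell)$, the map $g_{\ell+1}=g_\ell\circ r_{\ell+1}$ sends $P_{\ell+1}$ to $P$ and $o$ to $T^{\ell+1}(o)$. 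This completes the induction, and the negative direction is handled symmetrically with tails in place of heads. Strong regularity is needed here: it guarantees that each head and tail is a unique vertex, so every step is defined at the same time in both pictures. This is the sense of "defined simultaneously".

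\emph{Boundedness.} By the identity just proved, $|T^\ell(o)-x|=|o-g_\ell^{-1}(x)|$ for every $x\in P$. Hence $\operatorname{dist}(T^\ell(o),P)=\operatorname{dist}(o,P_\ell)$. Since $P$ is bounded, the orbit is bounded if and only if the distances $\operatorname{dist}(o,P_\ell)$ are bounded, which holds if and only if the union of the $P_\ell$ is bounded (the $P_\ell$ all have the same diameter).

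\emph{Periodicity.} Suppose $T^N(o)=o$. Then the isometry $g_N$ fixes $o$. It is either a translation or a point reflection, so it is the identity or the reflection about $o$. I expect this step to be the main obstacle: ruling out the point reflection about $o$. I would use orientation of the supporting rays. $P$ and $P_N=g_N^{-1}(P)$ are both seen from $o$ inside the angular sector between the supporting rays. Reflecting through $o$ would move $P$ to the opposite sector, while the head data along the necklace rotate consistently. In fact, when the orbit returns to $o$ the supporting rays of $P$ from $o$ are identical, so one can simply compare $P_N$ with $P$ using the inductive description. Once $g_N$ is shown to be the identity, we get $P_N=P$, and the necklace is periodic because each step is determined by the current polygon and $o$. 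Conversely, if $P_N=P_0$, then the necklace construction from $P_N$ repeats, so $g_N$ maps $P$ to itself; comparing heads shows that $g_N$ is the identity, and therefore $T^N(o)=g_N(o)=o$.
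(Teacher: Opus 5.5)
Your conjugation argument (equivariance of $T$ under point reflections, and induction on $g_\ell=r_1\circ\cdots\circ r_\ell$) and your boundedness step are correct. They match what the paper does in its proof of the refractive version of this theorem: its $\varphi_\ell$ are your $g_\ell$ when all $\lambda_i=1$.

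The gap is in the periodicity part. You try to show that $T^N(o)=o$, and separately $P_N=P_0$, force $g_N=\mathrm{id}$ for the \emph{same} $N$. For odd $N$ this is false in both directions, so the ``main obstacle'' you flag cannot be overcome.

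\emph{Orbit to necklace.} If $T^N(o)=o$ with $N$ odd, then $g_N$ is a point reflection fixing $o$, hence the reflection about $o$. So $P_N=2o-P\neq P$, because a convex polygon cannot be centrally symmetric about an exterior point. Odd periods do occur. For a triangle $ABC$, labelled so that the head of $o=A+C-B$ is $A$, the point $o$ is strongly regular with reflection centres $A,B,C$. Then $T^3(o)=2(A-B+C)-o=o$, while $P_3\neq P_0$ and the necklace has period $6$.

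\emph{Necklace to orbit.} Conversely, $g_N(P)=P$ with $N$ odd only says that $g_N$ is a central symmetry of $P$, and comparing heads does not exclude this when $P$ is centrally symmetric. Identify the plane with $\mathbb{C}$ and take the regular hexagon with vertices $\omega^j$, $\omega=e^{i\pi/3}$. The point $o=\tfrac65\, i\omega$ has reflection centres $\omega^2,\omega,1$, so $T^3(o)=2(1-\omega+\omega^2)-o=-o$. Hence $g_3=-\mathrm{id}$ and $P_3=-P=P_0$, even though $T^3(o)\neq o$.

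The repair is the one the paper uses: periodicity only asks for \emph{some} period, so pass to an even multiple. Since $g_{2N}$ is a composition of an even number of point reflections, it is a translation.
\begin{itemize}
\item If $T^N(o)=o$, then $g_{2N}$ fixes $o$, so $g_{2N}=\mathrm{id}$ and $P_{2N}=g_{2N}^{-1}(P)=P_0$.
\item If $P_N=P_0$, then $P_{2N}=P_0$, so $g_{2N}(P_0)=P_0$. A translation mapping a bounded nonempty set onto itself is the identity, hence $T^{2N}(o)=g_{2N}(o)=o$.
\end{itemize}
As the two examples show, the orbit period and the necklace period can genuinely differ by a factor of $2$. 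The statement, which only asserts ``periodic if and only if periodic,'' allows this.
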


Now we introduce Gutkin's \textit{cone and ray} construction. 
\begin{definition}
    Fix a convex $n$-gon $P$ in the plane. Since $P$ is an $n$-gon, we can draw $n$ straight lines $\ell_1, \dots, \ell_n$ through $o$ that are parallel to each side of $P$ (note that if $P$ has parallel sides, these lines may overlap; we assume from now on that the $n$ side directions of $P$ are distinct, so that there are exactly $2n$ cones---otherwise, $n$ should be replaced by the number of distinct side directions throughout, as in \cite{Gutkin}). The lines partition the plane into $2n$ cones, possibly degenerate, which we denote $C_1, \dots, C_{2n},$ in counter-clockwise order. 
    
    The lines $\ell_1, \dots, \ell_n$ create $2n$ rays in the plane, labeled as follows. Let cone $C_i$ be bounded by rays $R_i$ and $R_{i+1}$, and so forth, with the rays being labeled counter-clockwise. Note that we have the identities $C_{2n+1} = C_1, R_{2n+1} = R_1$ and $C_{n+i} = -C_i, R_{n+i} =-R_i.$ See Figure~\ref{fig: cone}.
\end{definition}

\begin{figure}[!htb]
    \centering
\begin{tikzpicture}[scale=1]
  \pgfmathsetmacro{\L}{3}

  \pgfmathsetmacro{\lenA}{sqrt(1*1 + 0*0)}              
  \pgfmathsetmacro{\lenB}{sqrt(0*0 + 2*2)}              
  \pgfmathsetmacro{\lenC}{sqrt(1.5*1.5 + 1.3*1.3)}      
  \pgfmathsetmacro{\lenD}{sqrt(0.5*0.5 + 0.7*0.7)}      
  \coordinate (R1end) at ({\L*1/\lenA}, {\L*0/\lenA});

  \coordinate (R2end) at ({\L*1.5/\lenC}, {\L*1.3/\lenC});

  \coordinate (R3end) at ({\L*0/\lenB}, {\L*2/\lenB});

  \coordinate (R4end) at ({\L*(-0.5)/\lenD}, {\L*0.7/\lenD});

  \coordinate (R5end) at ({\L*(-1)/\lenA}, {\L*0/\lenA});

  \coordinate (R6end) at ({\L*(-1.5)/\lenC}, {\L*(-1.3)/\lenC});

  \coordinate (R7end) at ({\L*0/\lenB}, {\L*(-2)/\lenB});

  \coordinate (R8end) at ({\L*0.5/\lenD}, {\L*(-0.7)/\lenD});

  \draw[thick] (0,0) -- (R1end) node[right] {$R_1$};
  \draw[thick] (0,0) -- (R2end) node[above right] {$R_2$};
  \draw[thick] (0,0) -- (R3end) node[above] {$R_3$};
  \draw[thick] (0,0) -- (R4end) node[above left] {$R_4$};
  \draw[thick] (0,0) -- (R5end) node[left] {$R_5$};
  \draw[thick] (0,0) -- (R6end) node[below left] {$R_6$};
  \draw[thick] (0,0) -- (R7end) node[below] {$R_7$};
  \draw[thick] (0,0) -- (R8end) node[below right] {$R_8$};

  \coordinate (C1pos) at ($0.3*(R1end) + 0.3*(R2end)$);
  \coordinate (C2pos) at ($0.3*(R2end) + 0.3*(R3end)$);
  \coordinate (C3pos) at ($0.3*(R3end) + 0.3*(R4end)$);
  \coordinate (C4pos) at ($0.3*(R4end) + 0.3*(R5end)$);
  \coordinate (C5pos) at ($0.3*(R5end) + 0.3*(R6end)$);
  \coordinate (C6pos) at ($0.3*(R6end) + 0.3*(R7end)$);
  \coordinate (C7pos) at ($0.3*(R7end) + 0.3*(R8end)$);
  \coordinate (C8pos) at ($0.3*(R8end) + 0.3*(R1end)$);

  \node at (C1pos) {$C_1$};
  \node at (C2pos) {$C_2$};
  \node at (C3pos) {$C_3$};
  \node at (C4pos) {$C_4$};
  \node at (C5pos) {$C_5$};
  \node at (C6pos) {$C_6$};
  \node at (C7pos) {$C_7$};
  \node at (C8pos) {$C_8$};

\end{tikzpicture}
    \caption{The cone and ray construction for $P_0$ in Figure~\ref{fig: necklace}.}
    \label{fig: cone}
\end{figure}

We say that a polygon on the plane is \textit{inside} the cone $C_j$ if it intersects $C_j$ and does not intersect the next cone $C_{j+1}$. Let $G$ be the group of reflections and translations of the plane. Then, define $\mathscr{P}$ to be the set of polygons \textit{congruent} to $P_0$ (i.e., polygons $Q$ such that $Q=g(P_0)$ for some $g\in G$) that are also strongly regular about $o.$ Denoting our original polygon by $P_0,$ the following lemma holds. 
\begin{lemma}[Gutkin \protect{\cite[pp. 436--437]{Gutkin}}]
Choose a cone $C_j.$ Then, consider the subset $\mathscr{P}_j\subset\mathscr{P}$ of polygons inside the cone $C_j.$ For each $Q\in \mathscr{P}_j,$ the head and the tail about $o$ are well-defined. Furthermore, the vector connecting the head and the tail does not depend on the choice of $Q\in \mathscr{P}_j.$
\end{lemma}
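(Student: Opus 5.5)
Since $Q$ lies inside $C_j$, every point of $Q$ satisfies two linear constraints and $Q$ meets the open region of $C_j$. I will translate "head" and "tail" into extremal-vertex conditions in fixed directions; the key point is that these directions depend only on $j$, not on $Q$.

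\textbf{Step 1: fix the directions.} Let $u$ and $v$ be the unit direction vectors of the rays $R_j$ and $R_{j+1}$ bounding $C_j$. Each ray is parallel to a side of $P_0$, and hence to a side of every $Q = g(P_0)$ with $g\in G$. This holds because the linear part of $g$ is $\pm \mathrm{id}$: the elements of $G$ are translations and point reflections, as in the necklace construction. Polygons in $\mathscr{P}$ are therefore translates of $P_0$ or of $-P_0$, and $-P_0$ has the same side directions.

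\textbf{Step 2: locate the tangency points.} Take $Q\in\mathscr{P}_j$. Since $Q$ meets $C_j$ but not $C_{j+1}$ (and, by convexity together with $o\notin Q$, lies on one side of the line through $R_{j+1}$), the direction from $o$ to any point of $Q$ ranges over an angular interval. This interval contains some directions of $C_j$ and none beyond $R_{j+1}$. The supporting rays $L_\pm(o)$ from $o$ are the extreme directions of this interval. Because $o$ is strongly regular, no side of $Q$ lies on a line through $o$, so each supporting ray touches $Q$ at a unique vertex. Head and tail are therefore well defined; this is the first claim.

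\textbf{Step 3: head and tail are independent of $Q$.} This is the main step. I plan to argue that both tangency vertices are characterized by a combinatorial condition depending only on the cone. A vertex $w$ of a convex polygon is the tangency point of a supporting line from $o$ in direction $d$ exactly when $d$ lies between the directions of the two sides incident to $w$, in the appropriate orientation.

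Now consider the angular interval of directions from $o$ to $Q$. Its extreme directions lie strictly inside sectors cut out by the lines $\ell_1,\dots,\ell_n$, because $o$ is strongly regular and so no supporting ray is parallel to a side. The left supporting direction lies in $C_j$: it cannot pass $R_{j+1}$, and $Q$ meets $C_j$. The right supporting direction lies in the antipodal-adjacent configuration determined by the fact that $Q$ is on the appropriate side of the line $\ell$ containing $R_{j+1}$.

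The edge-normal fan of $P_0$ (respectively $-P_0$) assigns to every open sector between consecutive side directions a unique vertex of the polygon that is extremal for supporting lines with direction in that sector. So the head is the vertex assigned to sector $C_j$, and likewise the tail is the vertex assigned to the corresponding sector. These vertices are fixed vertices $w_h,w_t$ of the model polygon ($P_0$ or $-P_0$). Under a translation $Q=P_0+t$, the head-to-tail vector $w_t-w_h$ is unchanged.

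\textbf{Step 4: the obstacle.} The hard part will be checking that translates of $P_0$ and of $-P_0$ lying in $C_j$ give the same vector. For $-P_0$ the fan is rotated by $\pi$, so the assigned vertices are $-w'_h,-w'_t$ for the vertices $w'$ of $P_0$ attached to the opposite sectors $C_{n+j}$. I will first try to show that the constraint "$Q$ lies inside $C_j$" forces the same orientation class, or at least the same side vector. The idea is that in both cases head and tail are the endpoints of the chain of $Q$ facing $o$, which is determined by $R_j,R_{j+1}$. If this fails, I will fall back on Gutkin's convention that $\mathscr{P}_j$ is handled separately for the two orientation classes, which suffices for the necklace argument since necklace parity alternates.
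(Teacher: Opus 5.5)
The paper gives no argument for this lemma (it defers to Gutkin), so I judge yours on its own terms. Steps 1 and 2 are fine, and the normal-fan idea is the right tool. But Step 3 has a genuine gap exactly where the work lies: you never determine which sector the \emph{tail} direction lies in, and ``the corresponding sector'' is not determined by $j$. Knowing that $Q$ meets $C_j$ and misses $C_{j+1}$ puts the head direction in $C_j$. However, a $Q$ that meets $R_j$ has its tail direction in $C_{j-1}$ or earlier.

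Write $h_i,t_i$ for the vertices of $P_0$ touched by the two supporting lines parallel to a direction in $C_i$, with $P_0$ on their right and left respectively, so $\vec a_i=h_i-t_i$. Across $R_j$ exactly one of $h,t$ changes. If it is $t$, take a translate $P_0+\vec u$ that straddles $R_j$, misses $C_{j+1}$, and has its tail direction in $C_{j-1}$. Its head is $h_j+\vec u$ and its tail is $t_{j-1}+\vec u$, so its vector is $\vec a_{j-1}\neq\vec a_j$.

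Concretely, put $o$ at the origin, let $P_0$ be the triangle with vertices $(5,-1),(7,-1),(6,1)$, and let $C_1$ be the cone between the positive $x$-axis and the direction $(1,2)$. Both $P_0$ and $P_0+(0,2)$ meet $C_1$ and miss $C_2$; perturb them slightly to make them strongly regular. Their tail-to-head vectors are $(6,1)-(5,-1)=(1,2)$ and $(6,3)-(7,1)=(-1,2)$. So with the definition of ``inside'' you are using, the independence claim is false and Step 3 cannot be completed. What is true, and what your fan argument does prove, is the version for $Q\subset C_j$. Then both supporting directions lie in $C_j$, and $Q=P_0+\vec u$ has head $h_j+\vec u$ and tail $t_j+\vec u$.

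Step 4 is also left open, and your first plan cannot succeed: $\mathscr P_j$ always contains translates of both $P_0$ and $-P_0$, so nothing forces a single orientation class. The fallback of treating the classes separately changes the statement. It also loses exactly what the necklace needs, since consecutive necklace polygons have opposite orientations. The missing observation is one line. Reversing a direction swaps the left and right supporting lines, so the vertices of $P_0$ attached to $C_{n+j}$ that you mention are $h_{n+j}=t_j$ and $t_{n+j}=h_j$. Hence $Q=-P_0+\vec u\subset C_j$ has head $-t_j+\vec u$ and tail $-h_j+\vec u$, and its vector is again $h_j-t_j$. In other words, $\vec a_j$ is the vector between the contact points of the two supporting lines in a direction of $C_j$, and this vector is unchanged by $x\mapsto -x$.
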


For a given cone $C_j$, we call the vector starting from the tail and ending at the head of any $Q\in \mathscr{P}_j$ the \textit{necklace vector in cone $C_j$}, and denote it as $\vec{a}_j.$ Note that $\vec{a}_{j+n} = -\vec{a}_j.$

Now, choose a point $A_1\in R_1.$ Draw the ray emanating from $A_1$ in the direction of $\vec{a}_1$, until it intersects $R_2$ at $A_2$. Then draw the ray in direction $\vec{a}_2$ from $A_2$, and repeat the process until we return to the ray $R_{2n+1} = R_1$ at $A_{2n+1}$. 

\begin{lemma}[Gutkin \protect{\cite[pp. 438--440]{Gutkin}}]
    The polygonal line generated by this process is closed; i.e., $A_{2n+1} = A_1.$
\end{lemma}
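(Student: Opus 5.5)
The approach is to reduce closure to a statement about a product of scaling factors, and then exhibit one explicit closed polygonal line to show that this product equals $1$.

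\textbf{Step 1: the construction is homogeneous.} Place $o$ at the origin and parametrize each ray by $R_j=\{t\,w_j: t>0\}$, where $w_j$ is a unit vector. Take a point $t\,w_j$ on $R_j$ and move from it in the fixed direction $\vec a_j$ until the line through $R_{j+1}$ is reached. The landing point is $\alpha_j t\,w_{j+1}$, where $\alpha_j>0$ depends only on the angles between $w_j$, $w_{j+1}$ and $\vec a_j$ (a sine-law ratio). Here it is essential that the rays pass through $o$: the map $t\mapsto \alpha_j t$ is \emph{linear}, with no translation term, because the dilation centered at $o$ preserves every ray and every direction. We must also check that the ray from $R_j$ in direction $\vec a_j$ really crosses $R_{j+1}$, not the opposite ray $-R_{j+1}$; this is the transversality identified in Step 2.

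Composing the $2n$ steps, the construction sends $A_1=t\,w_1$ to $A_{2n+1}=\bigl(\prod_{j=1}^{2n}\alpha_j\bigr)t\,w_1$. So the polygonal line closes for every starting point if and only if $\prod_j\alpha_j=1$. It therefore suffices to produce a \emph{single} closed polygonal line with vertices on $R_1,\dots,R_{2n}$ and edge directions $\vec a_1,\dots,\vec a_{2n}$.

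\textbf{Step 2: identify the necklace vectors.} By the previous lemma, $\vec a_j$ may be computed using a translate $Q$ of $P_0$ lying very far out in $C_j$. The lines from $o$ to such a $Q$ are then nearly parallel to a direction $u\in C_j$. Hence the head and tail of $Q$ are the two vertices of $P_0$ that are extreme in the directions $\pm u^{\perp}$. Consequently $\vec a_j=v_+(u)-v_-(u)$ is the vertex of the difference body $D=P_0-P_0$ that is extreme in direction $u^\perp$.

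The body $D$ is a centrally symmetric convex $2n$-gon whose sides are parallel to the sides of $P_0$. As $u$ sweeps counterclockwise through $C_1,\dots,C_{2n}$, the vectors $\vec a_1,\dots,\vec a_{2n}$ run through the vertices of $D$ in order, and $\vec a_{j+n}=-\vec a_j$ recovers central symmetry. Moreover, $\vec a_{j}-\vec a_{j-1}$ is an edge of $D$, which is parallel to the side of $P_0$ defining $R_j$. This is the transversality needed in Step 1.

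\textbf{Step 3: an explicit closed line.} Let $\Pi$ be the polygon whose consecutive edge vectors are $\vec a_1,\dots,\vec a_{2n}$. It closes because $\sum_j\vec a_j=0$ by the antisymmetry $\vec a_{j+n}=-\vec a_j$. It is centrally symmetric, and we translate it so that its center is $o$. The vertex $B_j$ between edges $\vec a_{j-1}$ and $\vec a_j$ equals $-\tfrac12(\vec a_j+\dots+\vec a_{j+n-1})$.

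The task is then to show that $B_j\in R_j$. The plan is to verify this by checking that $B_j$ is the support point of $\Pi$ in the direction normal to $R_j$, using the ordering of the vertices of $D$ from Step 2. Indeed, $\Pi$ is, up to rotation by $90^\circ$, a polar-type dual of $D$: its edges are the vertices of $D$, and its vertex directions are the edge directions of $D$.

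Once $B_j\in R_j$ for all $j$, the line $\Pi$ is a closed polygonal line of the required form. Step 1 then gives $\prod\alpha_j=1$, and so every starting point $A_1$ closes up.

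\textbf{Main obstacle.} I expect the hard part to be Step 3: showing that the vertices of $\Pi$ lie on the \emph{correct} rays $R_j$, not on $-R_j$ and not shifted by one index. This is a careful orientation and indexing check relating the cone labeling to the vertex order of $P_0-P_0$. If it becomes awkward, I would use the fallback of proving only the half-turn statement $\prod_{j=1}^{n}\alpha_j=1$, i.e.\ that $A_{n+1}=-A_1$, by the same support-function argument. This suffices, since $\alpha_{j+n}=\alpha_j$ by the central symmetry of the rays and vectors.
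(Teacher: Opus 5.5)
The gap is your Step 3: the closed line $\Pi$ with edges exactly $\vec a_1,\dots,\vec a_{2n}$ does not, in general, have its vertices on the rays.

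Your Steps 1 and 2 are sound. The construction is homogeneous, so it suffices to exhibit one closed line with vertices on $R_1,\dots,R_{2n}$ and edges that are \emph{positive multiples} of $\vec a_1,\dots,\vec a_{2n}$. Also, $\vec a_j$ is indeed the vertex of $D=P_0-P_0$ extreme in direction $u^\perp$. (The paper gives no argument of its own here; it only cites Gutkin--Sim\'anyi.)

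The failure in Step 3 is not an orientation or indexing issue: the claim $B_j\in R_j$ is false. Take $P_0$ with vertices $(0,0),(2,0),(1,2),(0,1)$. The rays $R_1,\dots,R_4$ point along $(1,0),(1,1),(0,1),(-1,2)$. One computes $\vec a_1=(-1,2)$, $\vec a_2=(-2,1)$, $\vec a_3=(-2,0)$, $\vec a_4=(-1,-2)$. Then $B_1=-\frac12(\vec a_1+\dots+\vec a_4)=(3,-\frac12)$ lies in the interior of $C_8$, on no ray at all. The true closed line through $(\frac12,0)$ passes through $(\frac13,\frac13)$, $(0,\frac12)$, $(-\frac14,\frac12)$ and $(-\frac12,0)$. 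Its edges are $r_j\vec a_j$ with $(r_1:r_2:r_3:r_4)=(4:4:3:6)$.

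There is also a structural reason your claim cannot hold. If $\Pi$ worked, then $r_j=1$ for all $j$ and every polygon would be quasi-rational. That would make the hypothesis of the boundedness theorem vacuous, and would contradict Schwartz's unbounded orbits for the Penrose kite. Your polar-duality heuristic is where this breaks. The $90^\circ$-rotated polar of $D$ does have its $j$-th edge \emph{parallel} to $\vec a_j$ and its vertices on the rays. But that edge's length depends on $\vec a_{j-1}$ and $\vec a_{j+1}$ as well, and those lengths are exactly the $r_j$. Your half-turn fallback rests on the same claim about $\Pi$, so it does not rescue the argument.

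What is missing is an invariant of the construction rather than an explicit closed line. You already have the ingredient from Step 2: $\vec a_{j+1}-\vec a_j$ is parallel to $R_{j+1}$, because crossing $R_{j+1}$ moves only the head or only the tail, along the side of $P_0$ parallel to $R_{j+1}$. Write $x\times y=x_1y_2-x_2y_1$. Then $A_{j+1}\times\vec a_j=A_j\times\vec a_j$, since $A_{j+1}-A_j\parallel\vec a_j$. Also $A_{j+1}\times\vec a_{j+1}=A_{j+1}\times\vec a_j$, since $\vec a_{j+1}-\vec a_j\parallel A_{j+1}$. Hence $A_{2n+1}\times\vec a_1=A_1\times\vec a_1$. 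Both points lie on $R_1$ and $w_1\times\vec a_1>0$ (your transversality), so $A_{2n+1}=A_1$.

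Equivalently, let $J$ be rotation by $+90^\circ$ and $h_D$ the support function of $D$. The construction traces a level curve of $x\mapsto h_D(Jx)$, which is linear on each cone. So the closed line you wanted in Step 3 is the rotated polar curve $J^{-1}\partial D^{\circ}$, not $\Pi$.
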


 This process traces a polygon called the \textit{necklace polygon}, denoted $Q$.
 
\begin{remark}
    Any changes in the definition of $Q$, i.e., the position of $o$, the choice of $R_1$, and the position of $A_1$ on $R_1$, changes $Q$ by translations and dilations only. Moreover, the necklace polygon is a convex, centrally symmetric $2n$-gon.
\end{remark}

Using this definition of the necklace polygon, we can now define \textit{quasi-rationality}.

\begin{definition}
    Take $Q = A_1\dots A_{2n+1}$ to be a necklace polygon of $P.$ Then there exist positive real numbers $r_1, r_2,\dots, r_{2n}$ such that for $1\leq i \leq 2n,$ we have 
    \[
    \overrightarrow{A_iA_{i+1}} = r_i\vec{a}_i.
    \]
    Note that $r_{n+i} = r_i.$ 
    
    We say that the polygon $P$ is \textit{quasi-rational} if $r_1, \dots, r_n $ are rational up to a common factor, i.e., $(r_1:r_2:\dots:r_n) \in \mathbb{QP}^{n-1}.$ We say that $P$ is \textit{rational} if the vertices of $P$ belong to a lattice.
\end{definition}

It should again be noted that every rational polygon is quasi-rational---refer to \cite{Gutkin} for a proof. We also collect a definition that is useful down the line.
\begin{definition}
    A \textit{truncated strip} is an infinite strip bordered by two parallel rays and a polygonal line. See Figure \ref{fig: truncatedstrip}.
\end{definition}

\begin{figure}[!htb]
    \centering
    \begin{tikzpicture}[scale=2]

  \coordinate (Tleft)  at (-1,1);   
  \coordinate (Tright) at (4,1);    
  \coordinate (Bleft)  at (-1,0);   
  \coordinate (Bright) at (4,0);    

  \coordinate (P1) at (0.7,0);        
  \coordinate (P2) at (0.3,0.4);      
  \coordinate (P3) at (0.5,1);      

  \draw[thick,->] (Tleft) -- (Tright);
  \draw[thick,->] (Bleft) -- (Bright);

  \draw[thick] (P1) -- (P2);
  \draw[thick] (P2) -- (P3);

  \fill[pattern=north east lines,pattern color=black]
    (P1) -- (P2) -- (P3) -- (Tright) -- (Bright) -- cycle;

  \node[left] at ($(P2)!0.6!(P3)$) {$\ell$};

\end{tikzpicture}
    \caption{A truncated strip.}
    \label{fig: truncatedstrip}
\end{figure}

\section{Orbits in Refractive Outer Billiards}\label{sec:gutkin}
We now extend to refractive outer billiards, where we now have \textit{refractive coefficients} $\lambda_1, \dots, \lambda_k$ that multiply to 1. Similarly to the regular outer billiard case, the points for which $T^\ell(o)$ is not well-defined for some $\ell\in \mathbb{Z}$ is a countable union of lines, and thus has zero measure. We will only work with \textit{strongly regular} points. 

The analog of the necklace construction is as follows. Fix a point $o$ at the origin and choose a polygon $P=P_0$ on the plane. Then, $P_1$ is the reflection of $P_0$ about its head $A_1,$ followed by a scaling by $1/\lambda_1$ about $A_1.$ More generally, $P_\ell$ is the reflection of $P_{\ell-1}$ about $A_\ell,$ followed by a scaling by $1/\lambda_\ell$ about $A_\ell.$ It is, of course, also the reflection of $P_{\ell+1}$ about its tail $A_\ell,$ followed by a scaling by $\lambda_{\ell+1}.$ 

Thus, given a polygon $P_0,$ we obtain the sequence $\{\dots, P_{-1}, P_0, P_1, \dots\},$ which we will call the \textit{refractive necklace of $P$ about $o$}. Moreover, we define the \textit{refractive necklace map} as the transformation that sends $P_\ell$ to $P_{\ell+1}.$ With this definition in mind, there is a natural correspondence between refractive billiards and the refractive necklace. See Figure~\ref{fig: refractivenecklace}.
\vspace{0.3cm}
\begin{figure}[!hbt]
    \centering
        \begin{tikzpicture}[scale=1.2]
  \coordinate (P0V1) at (0,0);
  \coordinate (P0V2) at (1,0);
  \coordinate (P0V3) at (1,2);
  \coordinate (P0V4) at (-0.5,0.7);

  \coordinate (A0) at (0,0);
  \coordinate (A1) at (1,2);
  \coordinate (A2) at (1,3);
  \coordinate (A3) at (1,5);
  \coordinate (O)  at (-4,2.2);

  \draw[thick]
    (P0V1) -- (P0V2) -- (P0V3) -- (P0V4) -- cycle;
  \draw[pattern=north east lines, pattern color=gray]
    (P0V1) -- (P0V2) -- (P0V3) -- (P0V4) -- cycle;

  \coordinate (P0mid) at (0.45,0.75);

  \coordinate (P1V1_full) at ($2*(A1) - (P0V1)$);
  \coordinate (P1V2_full) at ($2*(A1) - (P0V2)$);
  \coordinate (P1V3_full) at ($2*(A1) - (P0V3)$);
  \coordinate (P1V4_full) at ($2*(A1) - (P0V4)$);

  \coordinate (P1V1) at ($(A1)!0.5!(P1V1_full)$);
  \coordinate (P1V2) at ($(A1)!0.5!(P1V2_full)$);
  \coordinate (P1V3) at ($(A1)!0.5!(P1V3_full)$);
  \coordinate (P1V4) at ($(A1)!0.5!(P1V4_full)$);

  \draw[thick]
    (P1V1) -- (P1V2) -- (P1V3) -- (P1V4) -- cycle;
  \draw[pattern=north east lines, pattern color=gray]
    (P1V1) -- (P1V2) -- (P1V3) -- (P1V4) -- cycle;

  \coordinate (P1mid) at ($0.25*(P1V1)+0.25*(P1V2)+0.25*(P1V3)+0.25*(P1V4)$);

  \coordinate (R2) at (1,3); 

  \coordinate (P2V1) at ($(P0V1)+(0,3)$);
  \coordinate (P2V2) at ($(P0V2)+(0,3)$);
  \coordinate (P2V3) at ($(P0V3)+(0,3)$);
  \coordinate (P2V4) at ($(P0V4)+(0,3)$);

  \draw[thick]
    (P2V1) -- (P2V2) -- (P2V3) -- (P2V4) -- cycle;
  \draw[pattern=north east lines, pattern color=gray]
    (P2V1) -- (P2V2) -- (P2V3) -- (P2V4) -- cycle;

  \coordinate (P2mid) at ($0.25*(P2V1)+0.25*(P2V2)+0.25*(P2V3)+0.25*(P2V4)$);

  \coordinate (Pminus1V1_full) at (-1,0);
  \coordinate (Pminus1V2_full) at (0.5,-0.7);
  \coordinate (Pminus1V3_full) at (-1,-2);

  \coordinate (Pminus1V1) at ($(A0)!0.5!(Pminus1V1_full)$);
  \coordinate (Pminus1V2) at ($(A0)!0.5!(Pminus1V2_full)$);
  \coordinate (Pminus1V3) at ($(A0)!0.5!(Pminus1V3_full)$);

  \draw[thick]
    (A0) -- (Pminus1V1) -- (Pminus1V3) -- (Pminus1V2) -- cycle;
  \draw[pattern=north east lines, pattern color=gray]
    (A0) -- (Pminus1V1) -- (Pminus1V3) -- (Pminus1V2) -- cycle;

  \coordinate (Pminus1mid) at ($0.4*(Pminus1V3)+0.6*(A0)$);

  \node at (P0mid)       {$P_0$};
  \node at (P1mid)       {$P_1$};
  \node at (P2mid)       {$P_3$};
  \node at (Pminus1mid)  {$P_{-1}$};

  \fill (A0) circle (0.05)
        node[below right, xshift=10pt, yshift=3pt] {$A_0$};
  \fill (A1) circle (0.05) node[below right] {$A_1$};
  \fill (A2) circle (0.05) node[below left]  {$A_2$};
  \fill (A3) circle (0.05) node[below right] {$A_3$};

  \fill (Pminus1V3) circle (0.05) node[left] {$A_{-1}$};

  \fill (O) circle (0.05) node[left] {$o$};

  \draw[thick, shorten >=0pt, shorten <=-0.5cm] (A3)         -- (O);
  \draw[thick, shorten >=0pt, shorten <=-0.5cm] (A2)         -- (O);
  \draw[thick, shorten >=0pt, shorten <=-0.5cm] (A1)         -- (O);
  \draw[thick, shorten >=0pt, shorten <=-0.5cm] (A0)         -- (O);
  \draw[thick, shorten >=0pt, shorten <=-0.5cm] (Pminus1V3)  -- (O);

\end{tikzpicture}

\caption{Part of the refractive necklace with $\lambda_1 = 2, \lambda_2 = 1/2.$}
    \label{fig: refractivenecklace}
\end{figure}
\begin{remark}
Because the $m$-th step of $T$ carries the index $\lambda_m$, a return $T^N(o) = o$ alone does not entail that the orbit repeats. The next step applies the index $\lambda_{N+1}$, which does not necessarily equal $\lambda_1$. We therefore say that the orbit is \emph{periodic} if the bi-infinite sequence $(T^m(o))_{m \in \mathbb{Z}}$ is periodic, i.e., there exists some $N>0$ such that $T^{m+N}(o) = T^m(o)$ and $\lambda_{m+N} = \lambda_m$ for all $m$. Periodicity of the necklace $(P_m)_{m \in \mathbb{Z}}$ is defined similarly.
\end{remark}

\begin{theorem}
The refractive necklace of $P$ about $o$ is defined simultaneously with the orbit $\{T^\ell(o) \mid \ell \in \mathbb{Z}\}$, and the two determine each other: for every $\ell \in \mathbb{Z}$ there is a direct similarity $\varphi_\ell$ of the plane satisfying
\[
\varphi_\ell(o) = T^\ell(o), \qquad \varphi_\ell(P_\ell) = P_0,
\]
with ratio $\mu_\ell = \lambda_1 \cdots \lambda_\ell$ for $\ell \ge 0$ and $\mu_\ell = (\lambda_0 \lambda_{-1} \cdots \lambda_{\ell+1})^{-1}$ for $\ell < 0$. In particular, the configuration $(T^\ell(o), P_0)$ is similar to $(o, P_\ell)$, with
\[
d\bigl(T^\ell(o), P_0\bigr) = \mu_\ell\, d(o, P_\ell).
\]
Moreover, the orbit is bounded if and only if the necklace is bounded, and periodic if and only if the necklace is periodic.
\end{theorem}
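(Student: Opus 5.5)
We construct the similarities $\varphi_\ell$ by induction on $\ell$ in both directions, in parallel with Gutkin's theorem for the regular necklace. For $\ell\in\mathbb{Z}$ let $\rho_\ell$ denote the point reflection about $A_\ell$ followed by scaling by $1/\lambda_\ell$ about $A_\ell$. This is a direct similarity (a homothety of ratio $-1/\lambda_\ell$) with $P_\ell=\rho_\ell(P_{\ell-1})$. Set $\varphi_0=\mathrm{id}$. Given $\varphi_{\ell-1}$ with $\varphi_{\ell-1}(o)=T^{\ell-1}(o)$ and $\varphi_{\ell-1}(P_{\ell-1})=P_0$, define
\[
\varphi_\ell=\varphi_{\ell-1}\circ\rho_\ell^{-1}\circ h_\ell ,
\]
where $h_\ell$ is the homothety of ratio $-\lambda_\ell$ centered at $\varphi_{\ell-1}^{-1}$ of the point at which the step $T^{\ell-1}(o)\mapsto T^\ell(o)$ is taken. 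The aim is to show that this composition simplifies to a direct similarity mapping $P_\ell$ to $P_0$ and $o$ to $T^\ell(o)$.

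The key geometric fact is that direct similarities preserve the notion of supporting rays and their orientation (left versus right), because they preserve orientation. Hence if $\psi$ is a direct similarity, the left tangency point of $\psi(P)$ seen from $\psi(x)$ is $\psi$ of the left tangency point of $P$ seen from $x$. Apply this with $\psi=\varphi_{\ell-1}$, $x=o$ and $P=P_{\ell-1}$. Since $A_\ell$ is the head of $P_{\ell-1}$ from $o$, the point $B_\ell:=\varphi_{\ell-1}(A_\ell)$ is the head of $P_0$ from $T^{\ell-1}(o)$, and this is exactly the point used by $T$ at step $\ell$. So
\[
T^\ell(o)=B_\ell+\lambda_\ell\bigl(B_\ell-T^{\ell-1}(o)\bigr)=\varphi_{\ell-1}\bigl(H_\ell(o)\bigr),
\]
where $H_\ell$ is the homothety about $A_\ell$ of ratio $-\lambda_\ell$. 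Note that $H_\ell=\rho_\ell^{-1}$. Therefore we can take
\[
\varphi_\ell:=\varphi_{\ell-1}\circ\rho_\ell^{-1}.
\]
Then $\varphi_\ell(P_\ell)=\varphi_{\ell-1}(P_{\ell-1})=P_0$ and $\varphi_\ell(o)=\varphi_{\ell-1}(\rho_\ell^{-1}(o))=T^\ell(o)$. The ratio multiplies by $\lambda_\ell$, giving $\mu_\ell=\lambda_1\cdots\lambda_\ell$.

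For negative $\ell$ we argue symmetrically with tails, $L_-$, and $T^{-1}$. Set $\varphi_{\ell}=\varphi_{\ell+1}\circ\rho_{\ell+1}$, whose ratio is $1/\lambda_{\ell+1}$ times the previous one, giving $\mu_\ell=(\lambda_0\cdots\lambda_{\ell+1})^{-1}$. The statement that the configuration $(T^\ell(o),P_0)$ is similar to $(o,P_\ell)$, together with the distance formula, is immediate from $\varphi_\ell$ being a similarity of ratio $\mu_\ell$. This induction also shows the two sequences are defined simultaneously: strong regularity of $o$ for the orbit is equivalent to $o$ never lying on a side-line of some $P_\ell$, because $\varphi_\ell$ carries side-lines to side-lines. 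Thus each object determines the other.

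For the equivalences, the main obstacle is boundedness, since $\mu_\ell$ need not be bounded. However, $\mu_\ell$ depends only on $\ell \bmod k$ through the partial products, because $\prod\lambda_i=1$. Hence $\mu_\ell$ takes at most $k$ values, all positive, and $c\le\mu_\ell\le C$ for all $\ell$. Then $d(T^\ell(o),P_0)=\mu_\ell d(o,P_\ell)$ shows that the orbit stays within bounded distance of $P_0$ iff $P_\ell$ stays within bounded distance of $o$. The diameters of the $P_\ell$ are $\operatorname{diam}(P_0)/\mu_\ell$, which are also bounded, so this is the same as the necklace being bounded. For periodicity, suppose $T^{m+N}(o)=T^m(o)$ and $\lambda_{m+N}=\lambda_m$ for all $m$. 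By the last condition we may take $N$ to be a multiple of the period of the index sequence, hence of $k$ after passing to a multiple, so $\mu_{m+N}=\mu_m$. In that case the head points satisfy $B_{m+N}=B_m$, and the $\rho$'s recur. Then $\varphi_{m+N}^{-1}\varphi_m$ is a direct similarity of ratio $1$ fixing $o$ and compatible with the head data, and we check that it is the identity so that $P_{m+N}=P_m$. The converse is the same argument run backwards. The delicate point here is fixing the right notion of period, i.e. using multiples of $k$ so that the index sequence and the ratios both match.
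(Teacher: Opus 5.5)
Your final construction ($\varphi_\ell=\varphi_{\ell-1}\circ\rho_\ell^{-1}$ for $\ell\ge1$ and $\varphi_\ell=\varphi_{\ell+1}\circ\rho_{\ell+1}$ for $\ell<0$) is the paper's, and so are the ratio computation, the simultaneity argument and the boundedness argument. Delete the first displayed definition with $h_\ell$: since $h_\ell=\rho_\ell^{-1}$, it equals $\varphi_{\ell-1}\circ\rho_\ell^{-2}$, which does not send $P_\ell$ to $P_0$.

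\textbf{The gap is in the periodicity step.} A direct similarity of ratio $1$ fixing $o$ is only a rotation about $o$, and ``compatible with the head data'' does not force it to be trivial. In fact your claim that $\varphi_{m+N}^{-1}\varphi_m$ is the identity is false whenever $N$ is odd. The missing observation is that each $\rho_\ell$ is a homothety with negative real ratio. Hence $\varphi_\ell$ has linear part $(-1)^\ell\mu_\ell$, and once $k\mid N$ the map $\varphi_{m+N}^{-1}\varphi_m$ has linear part $(-1)^N$. For odd $N$ it is therefore the point reflection through $o$. That gives $P_{m+N}=2o-P_m\neq P_m$, since equality would make $o$ the center of symmetry of the convex set $P_m$, hence $o\in P_m$. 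Concretely, take all $\lambda_i=1$ and $P$ a triangle $abc$. The point $o=a+c-b$ has an orbit of period $3$ (the vertices of the anticomplementary triangle). Yet $P_3=2o-P_0$, so the necklace only has period $6$.

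\textbf{The fix} is what the paper does: replace $N$ by a multiple that is divisible by $k$ \emph{and even}. Then each $\varphi_{m+N}^{-1}\varphi_m$ is a translation. Since it fixes $o$, it is the identity, and so $P_{m+N}=P_m$ for all $m$.

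\textbf{The converse needs the same reduction.} With $N$ even and $k\mid N$, $\varphi_m^{-1}\varphi_{m+N}$ is a translation preserving the bounded set $P_{m+N}=P_m$, hence trivial, so $T^{m+N}(o)=T^m(o)$. For odd $N$, by contrast, $P_N=P_0$ only says that $\varphi_N$ is the reflection through a center $c$ of $P_0$, which gives $T^N(o)=2c-o\neq o$.

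\textbf{A smaller point:} ``the $\rho$'s recur'' is circular as written. The map $\rho_\ell$ is centered at the necklace point $A_\ell$, and recurrence of the $A_\ell$ is what you are trying to prove. What actually recurs is $B_\ell$, and with the parity argument you do not need it at all.
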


\begin{proof}
We write the operation of reflecting about $B$ and scaling by $\lambda$ about $B$ as $\rho_{B,\lambda}(z) := B+\lambda\big((2B-z)-B\big) = B + \lambda(B - z).$
First, note that $\rho_{B,\lambda}^{-1} = \rho_{B,1/\lambda}$. 

In this notation, one orbit iteration is $T^m(o) = \rho_{A_m, \lambda_m}\bigl(T^{m-1}(o)\bigr)$, where $A_m$ is the tangent point of the support line drawn from $T^{m-1}(o)$ to $P_0$. One necklace iteration is $P_m = \rho_{A_m, 1/\lambda_m}(P_{m-1})$, equivalently $P_{m-1} = \rho_{A_m, \lambda_m}(P_m)$.

Note that $\rho$ induces a direct similarity of the plane. Now, recall some facts about direct similarities. Let $S$ be an arbitrary direct similarity. First, $S$ preserves orientation, hence carries the head (and the tail) of a point $z$ about a polygon $Q$ to the head (tail) of $S(z)$ about $S(Q)$. Second, $S \circ \rho_{B,\lambda} \circ S^{-1} = \rho_{S(B),\lambda}$.

\textit{Correspondence.} Set $\varphi_0 := \operatorname{id}$ and, recursively,
\[
\varphi_\ell := \varphi_{\ell-1} \circ \rho_{A_\ell, \lambda_\ell} \quad (\ell \ge 1),
\qquad
\varphi_{-\ell} := \varphi_{-\ell+1} \circ \rho_{A_{-\ell+1}, \lambda_{-\ell+1}}^{-1} \quad (\ell \ge 1),
\]
so that $\varphi_\ell$ is a direct similarity of the stated ratio $\mu_\ell$, with linear part $(-1)^{\ell}\mu_\ell$ for $\ell \geq 0$. A straightforward induction argument on $\ell \ge 0$ proves that $\varphi_\ell(o) = T^\ell(o)$ and $\varphi_\ell(P_\ell) = P_0.$

Since each $\varphi_\ell$ carries heads to heads and tails to tails, the orbit and the necklace can be extended past position $\ell$ under exactly the same condition, so the two are defined simultaneously. Moreover, we have $$d(T^\ell(o), P_0) = d\bigl(\varphi_\ell(o), \varphi_\ell(P_\ell)\bigr) = \mu_\ell\, d(o, P_\ell).$$

\emph{Boundedness.} Since the product of the $\lambda_i$ over one full cycle equals $1$, the ratio $\mu_m$ depends only on $m \bmod k$ and takes finitely many positive values; note also that the scale of $P_m$ is $\mu_m^{-1}$, so $\operatorname{diam}(P_m)$ is uniformly bounded. By the displayed distance identity, $\{T^m(o)\}_{m \in \mathbb{Z}}$ is bounded iff $\{d(o, P_m)\}$ is bounded, iff $\bigcup_m P_m$ is bounded.

\emph{Periodicity.} Since we can always replace a period by a multiple of it, we may assume that $N$ is even and that $k \mid N$; then $\mu_N = 1$, so $\varphi_N$ has linear part $(-1)^N \mu_N = 1$ and is a translation.

Suppose the necklace is periodic with period $N$. Then $\varphi_N(P_N) = P_0 = P_N$, so $\varphi_N$ is the identity; hence $T^N(o) = \varphi_N(o) = o$. Since each step of $T$ depends only on the current point and the current index, and $\lambda_{m+N} = \lambda_m$, the orbit is periodic with period $N$.

Conversely, suppose the orbit is periodic with period $N$. Then $\varphi_N$ is a translation fixing $o$, hence the identity, and $P_N = \varphi_N(P_N) = P_0$, so the necklace is periodic with period $N$.
\end{proof}

Using this correspondence, we can study the refractive necklace instead of the refractive billiards map. First, we restrict the definition of $\mu$ earlier to $$\Lambda_i:= 1/(\lambda_1\lambda_2\cdots\lambda_i).$$ Along the necklace, we have at most $2k$ different possible configurations of polygons up to translation: a polygon is either oriented the same as $P$ or a reflection of $P$, and there are $k$ possible sizes, namely $P$ scaled by a factor of $\{\Lambda_i \mid 0\leq i \leq k-1\}.$  

Let $\mathcal{S}$ be the set of possible configurations of polygons on the plane through iterations of the necklace map. Define $\mathcal{S}_i^+$ as the set of polygons that are congruent to $P_i$ and have the same orientation as $P,$ and $\mathcal{S}_i^-$ as the polygons congruent to $P_i$ with opposite orientation. So, the polygons in $\mathcal{S}_i = \mathcal{S}_i^- \sqcup \mathcal{S}_i^+$ are congruent to $P$ scaled by a factor of ${\Lambda_i}$. Immediately, we have \[\mathcal{S}=\mathcal{S}_0^- \sqcup \mathcal{S}_0^+\sqcup \mathcal{S}_1^- \sqcup \dots\sqcup \mathcal{S}_{k-1}^- \sqcup \mathcal{S}_{k-1}^+.\] Returning to the cone and ray construction from the previous section, pick a ray $R_j.$ We will use $\pm$ to simultaneously define constructions that hold for both $+$ and $-$.  

\begin{definition}
    Let $\mathcal{S}_{R_j}$ denote the set of polygons $ Q \in \mathcal{S}$ that intersect $R_j$ but not $R_{j+1}.$ Geometrically, this represents the polygons in $\mathcal{S}$ that have ray $R_j$ as the ``furthest'' ray it touches. 
    
    Define $\mathcal{S}_{i,j}^+$ to be the intersection of $\mathcal{S}_i^+$ and $\mathcal{S}_{R_j},$ and define $\mathcal{S}_{i,j}^-$ similarly. This is the set of polygons in $\mathcal{S}$ that are congruent to $P_i,$ in the same orientation as $P$ (if $+$) or the opposite orientation (if $-$), and intersects ray $R_j$ but not $R_{j+1}.$ Note that $\mathcal{S}_i^+\sqcup \mathcal{S}_i^-$ contains all polygons congruent to $P_i$, but $\mathcal{S}_{i,j}^+ \sqcup \mathcal{S}_{i,j}^-$ only contains the polygons congruent to $P_i$ which intersect $R_j$ but not $R_{j+1}$. Thus, generally,
    \[
    \mathcal{S}_i^\pm \neq \bigsqcup_{j=1}^{2n} \mathcal{S}_{i,j}^\pm.
    \]
\end{definition}

Within each set $\mathcal{S}_{i,j}^\pm,$ each polygon is the same orientation and size, so they differ by a translation only. In other words, polygons $Q, Q' \in \mathcal{S}_{i,j}^+$ satisfy $Q = Q' + \vec{u}$ for some vector $\vec{u}.$ Then the heads of $Q$ and $Q'$ in $R_j$, $A_+$ and $A'_+$ respectively, satisfy $A_+ = A'_++ \vec{u}.$ 

Thus, there exists a bijective correspondence between the set $\mathcal{S}_{i,j}^\pm$ and its \textit{set of heads} $H_{i,j}^\pm$. The \textit{head function} $h_{i,j}^\pm:\mathcal{S}_{i,j}^\pm \to H_{i,j}^\pm$ that sends a polygon to its head satisfies the relation $h(Q+\vec{u}) = h(Q) + \vec{u}$ for all $Q\in \mathcal{S}_{i,j}^\pm$ and $\vec{u}$ such that $Q + \vec{u} \in \mathcal{S}_{i,j}^\pm.$

There exist unique vectors $\vec{d}_j\in R_j$ and $\vec{b}_j\in R_{j+1}$ such that $\vec{b}_j = \vec{d}_j + \vec{a}_j,$ where $\vec{a}_j$ is the head-tail vector inside the cone, and $\vec{b}_j, \vec{d}_j, \vec{a}_j$ form a triangle in the plane. 
\begin{lemma}
    The set $H_{i,j}^\pm$ is a truncated strip for each $1\leq i \leq k$ and $1\leq j \leq 2n$. In particular, $H_{i,j}^\pm$ is bounded by $R_j$, the ray $\tilde{R}_j \subseteq R_j + \Lambda_i \vec{b}_j$, and some polygonal line. Moreover, $H_{i,j}^\pm$ is $H_{0,j}^\pm$ scaled by $\Lambda_i.$ See Figure~\ref{fig: parallelogram}.
\end{lemma}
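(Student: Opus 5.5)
The plan is to first settle the case $i=0$, where the polygons in $\mathcal{S}_{0,j}^\pm$ are translates of $P$ or of $-P$, and then get the general case by scaling.

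\textbf{Step 1: the case $i=0$.} Fix the translate class $Q+\vec u$ of a single polygon $Q$ with head on $R_j$.

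\begin{itemize}
\item Write $A_+$ for the head and $A_-$ for the tail. By the earlier lemma of Gutkin on necklace vectors, a polygon in $\mathcal{S}_{0,j}^\pm$ lies inside the cone $C_j$, and its head and tail satisfy $A_+-A_-=\vec a_j$.
\item Membership in $\mathcal{S}_{R_j}$ means $Q$ meets $R_j$ and misses $R_{j+1}$. For a translate $Q+\vec u$ in the cone, the head lies on the side of $Q$ facing $o$.
\item The condition ``misses $R_{j+1}$'' is a single linear inequality on the head $h$. It says that $h$ lies on the far side, from $R_{j+1}$, of the ray $R_j+\vec b_j$. This comes from the triangle $\vec b_j=\vec d_j+\vec a_j$: the extreme translate, whose tail sits on $R_j$ at $\vec d_j$-type position and whose head sits on $R_{j+1}$, has head exactly at the tip of $\vec b_j$.
\item The condition ``meets $R_j$'' should give the other boundary, the ray $R_j$ itself.
\item The third boundary comes from excluding translates that would cross $o$ or meet $R_{j-1}$ (that is, that stop being ``inside'' the cone). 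As the polygon slides toward the apex $o$, it hits $R_j$ or $o$ along a polygonal line. That polygonal line is a translate of part of $\partial P$ (or $-\partial P$), positioned according to the head. This gives the polygonal line in the truncated strip.
\end{itemize}

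\textbf{Expected obstacle.} The main difficulty is pinning down precisely that the two long boundaries are parallel rays, namely $R_j$ and a subray of $R_j+\vec b_j$. I would do this by parametrizing translates as $h=A_+ + s\,\hat r_j + t\,\hat n$, with $\hat r_j$ the direction of $R_j$ and $\hat n$ a transverse direction. Then I would check that the admissible $t$-range is an interval independent of $s$ for large $s$. This holds because far out along the cone, the only active constraints are the two lines through the side directions bounding $C_j$.

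\textbf{Step 2: general $i$ by scaling.} Let $D$ be the homothety $z\mapsto \Lambda_i z$ centered at $o$, which is the origin.

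\begin{itemize}
\item $D$ fixes every ray $R_j$ and every cone $C_j$, and it preserves orientation.
\item $D$ maps heads to heads and tails to tails, by the same direct-similarity remark used in the proof of the correspondence theorem.
\item A polygon congruent to $P_i$ is the image under $D$ of a translate of $\pm P$. Moreover, $Q\in \mathcal{S}_{0,j}^\pm$ if and only if $D(Q)\in\mathcal{S}_{i,j}^\pm$, since intersection with rays is preserved.
\item Hence $H_{i,j}^\pm=\Lambda_i H_{0,j}^\pm$.
\item The boundary ray $R_j+\vec b_j$ goes to $R_j+\Lambda_i\vec b_j$, and the polygonal line scales to a polygonal line. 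So $H_{i,j}^\pm$ is again a truncated strip, bounded by $R_j$, a subray of $R_j+\Lambda_i\vec b_j$, and a polygonal line.
\end{itemize}

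Strong regularity only removes a measure-zero set of boundary cases. These I would ignore, or handle by taking closures.
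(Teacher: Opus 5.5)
Your overall plan matches the paper's. The paper likewise takes the $i=0$ truncated strip from Gutkin's construction: it simply cites pp.~441--442 of \cite{Gutkin} and proves nothing there. It then obtains $H_{i,j}^\pm=\Lambda_i H_{0,j}^\pm$ by scaling. Your Step~2 is correct and states this scaling more carefully than the paper does. The homothety $z\mapsto\Lambda_i z$ about $o$ fixes every $R_j$, preserves orientation, carries heads to heads, and maps $\mathcal{S}_{0,j}^\pm$ bijectively onto $\mathcal{S}_{i,j}^\pm$. It therefore sends $R_j+\vec b_j$ to $R_j+\Lambda_i\vec b_j$.

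The problem is Step~1, which goes beyond the paper and, as written, attaches the constraints to the wrong boundaries.
\begin{itemize}
\item ``Misses $R_{j+1}$'' says the head lies on the $R_j$-side of the line through $o$ along $R_{j+1}$. That half-plane is bounded by a line through $o$, not by a line parallel to $R_j$, so it cannot produce the ray $R_j+\vec b_j$.
\item Both parallel sides come from ``meets $R_j$.'' The head (the counterclockwise-most point seen from $o$) must lie on the $C_j$-side of $R_j$; this gives $R_j$. The tail must lie on the other side. Head minus tail has the same component transverse to $R_j$ as $\vec a_j$, and hence as $\vec b_j=\vec d_j+\vec a_j$, since $\vec d_j\parallel R_j$. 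So the tail condition confines the head to the $R_j$-side of the line through $\vec b_j$ parallel to $R_j$.
\item ``Misses $R_{j+1}$'' then cuts this strip along the segment $[o,\vec b_j]\subset R_{j+1}$ and becomes part of the polygonal line. This is why the upper ray starts at $\vec b_j$ or later. Your ``extreme translate,'' with tail at $\vec d_j$ and head at $\vec b_j$, is exactly that corner.
\item Your asymptotic argument has the same flaw. Far out, the two active constraints are the two lines parallel to $R_j$. Lines along $R_j$ and $R_{j+1}$ would bound a wedge, not a strip.
\item Polygons in $\mathcal{S}_{R_j}$ straddle $R_j$, so their head--tail vector may differ from $\vec a_j$ by the side of $\pm P$ parallel to $R_j$. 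This is harmless, because only the transverse component enters. Still, your appeal to ``$A_+-A_-=\vec a_j$'' should be weakened to that statement.
\end{itemize}
Either repair Step~1 along these lines or, as the paper does, cite Gutkin for the case $i=0$.
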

\begin{proof}
    The first part is a direct generalization of the proof in pp. 441--442 of~\cite{Gutkin}, so it is omitted: polygons in $S^\pm_{i,j}$ are the $\Lambda_i$-scaled copies of those in $S^\pm_{0,j}$, hence $H^\pm_{i,j} = \Lambda_i H^\pm_{0,j}$. For the second part, note that polygons in $\mathcal{S}_{i,j}^\pm$ are just polygons in $\mathcal{S}_{0,j}^\pm$ scaled by $\Lambda_i.$ By virtue of the construction of the truncated strip in~\cite{Gutkin}, it follows that $H_{i,j}^\pm$ is simply $H_{1,j}^\pm$ scaled by $\Lambda_i$.
\end{proof}
Note, of course, that the strips $\mathcal{S}_{i,j}^+$ and $\mathcal{S}_{i,j}^-$ need not be the same, since their orientations are different.

\begin{figure}[!hbt]
\centering
\pgfmathsetmacro{\L}{6.5}
\pgfmathsetmacro{\ang}{40}
\pgfmathsetmacro{\lamtwo}{2}

\pgfmathsetmacro{\LparTop}{4.5}
\pgfmathsetmacro{\LparBot}{4.5}
    \begin{subfigure}[b]{0.475\textwidth}
        \centering
        \begin{tikzpicture}[scale=0.85, transform shape]
  \begin{scope}[shift={(0,7)}]
    \fill (0,0) circle (1pt);
    \draw[thick] (0,0) -- (\L,0);
    \draw[thick] (0,0) -- (\ang:\L);
    \coordinate (Btop) at (\ang:5);
    \draw[thick] (Btop) -- ($(Btop)+(\LparTop,0)$);
    \draw[thick] (Btop) -- ($0.3*(Btop)+(1.5,0)$) -- (0,0);
  \draw[pattern=north east lines, pattern color=gray]
    ($(Btop)+(\LparTop,0)$) -- (Btop) -- ($0.3*(Btop)+(1.5,0)$) -- (0,0) -- (\L,0);
  \end{scope}
\end{tikzpicture}
\caption{$H_{1,j}^+$}
    \end{subfigure}
    \hfill
    \begin{subfigure}[b]{0.475\textwidth}
        \centering
        \begin{tikzpicture}[scale=0.85, transform shape]
  \begin{scope}[shift={(0,7)}]
    \fill (0,0) circle (1pt);
    \draw[thick] (0,0) -- (\L,0);
    \draw[thick] (0,0) -- (\ang:\L);
    \coordinate (Btop) at (\ang:3);
    \draw[thick] (Btop) -- ($(Btop)+(\LparTop,0)$);
    \draw[thick] (Btop) -- ($0.3*(Btop)+(0.9,0)$) -- (0,0);
  \draw[pattern=north east lines, pattern color=gray]
    ($(Btop)+(\LparTop,0)$) -- (Btop) -- ($0.3*(Btop)+(0.9,0)$) -- (0,0) -- (\L,0);
  \end{scope}
\end{tikzpicture}
\caption{$H_{2,j}^+$}
    \end{subfigure}
    \vskip\baselineskip
    \begin{subfigure}[b]{0.475\textwidth}
        \centering
        \begin{tikzpicture}[scale=0.85, transform shape]
  \begin{scope}[shift={(0,7)}]
    \fill (0,0) circle (1pt);
    \draw[thick] (0,0) -- (\L,0);
    \draw[thick] (0,0) -- (\ang:\L);

    \coordinate (Btop) at (\ang:5);
    \draw[thick] (Btop) -- ($(Btop)+(\LparTop,0)$);
    \draw[thick] (Btop) -- ($0.7*(Btop)+(1.5,0)$) -- (0,0);
  \draw[pattern=north east lines, pattern color=gray]
    ($(Btop)+(\LparTop,0)$) -- (Btop) -- ($0.7*(Btop)+(1.5,0)$) -- (0,0) -- (\L,0);
  \end{scope}
\end{tikzpicture}
\caption{$H_{1,j}^-$}
    \end{subfigure}
    \hfill
    \begin{subfigure}[b]{0.475\textwidth}
        \centering
\begin{tikzpicture}[scale=0.85, transform shape]
  \begin{scope}[shift={(0,7)}]
    \fill (0,0) circle (1pt);
    \draw[thick] (0,0) -- (\L,0);
    \draw[thick] (0,0) -- (\ang:\L);
    \coordinate (Btop) at (\ang:3);
    \draw[thick] (Btop) -- ($(Btop)+(\LparTop,0)$);
    \draw[thick] (Btop) -- ($0.7*(Btop)+(0.9,0)$) -- (0,0);
  \draw[pattern=north east lines, pattern color=gray]
    ($(Btop)+(\LparTop,0)$) -- (Btop) -- ($0.7*(Btop)+(0.9,0)$) -- (0,0) -- (\L,0);
  \end{scope}
\end{tikzpicture}
\caption{$H_{2,j}^-$}
    \end{subfigure}
\caption{Four example regions with $\lambda_2 = 3/5.$}
\label{fig: parallelogram}
\end{figure}

For each cone $C_j$, we have $2k$ truncated strips. We take the \textit{disjoint union of truncated strips} \[H_j:=\bigsqcup_{i=1}^k (H_{i,j}^+ \sqcup H_{i,j}^-).\] By the bijective correspondence between $\mathcal{S}_{i,j}^\pm$ and $H_{i,j}^\pm$, each point in $H_j$ corresponds to a unique polygon.

Using this new language, we can define the \textit{next cone map.}
\begin{definition}
    Take a polygon $Q \in \mathcal{S}_{i,j}^\pm.$ Continue applying the refractive necklace map to $Q$ until it intersects ray $R_{j+1}.$ Denote this new polygon as $Q'.$ Note that this process terminates since the vector $\vec{a}_j$ is not parallel to either of the rays $R_j$ or $R_{j+1}.$
    
    The \textit{next cone map in cone $C_j$}, denoted $f_j$, maps heads to heads: $f_j: H_j\to H_{j+1}.$ It maps the head of $Q$ in $H_j$ to the head \textit{in cone $C_{j+1}$} of $Q',$ which lies in $H_{j+1}.$ Importantly, we need to add a head correction vector $\vec{h}^{\,\varepsilon,\tilde{\varepsilon}}_{p,j}$ that accounts for a change in the head vertex; it depends on the pair of orientations $(\varepsilon, \tilde{\varepsilon})$, on the scale $\Lambda_p$, and on the cone $C_j$.
\end{definition}

\begin{remark}
    Taking $H$ to be $H_1 \sqcup H_2 \sqcup \dots \sqcup H_{2n},$ the maps $f_j$, when combined, naturally extend to the \textit{general} next cone map $f:H\to H$. Note that $f$ satisfies \( f(H_j) \subseteq H_{j+1}.\)
\end{remark}
The map $f_j$ can also be viewed as a map of tuples $(A, i, \varepsilon) \mapsto (\tilde{A}, \tilde{i}, \tilde{\varepsilon}).$ Here, $A$ represents the location of the polygon's head in $C_j$; the values $i, \varepsilon$ represent the truncated strip $H_{i,j}^\varepsilon$ to which the polygon belongs. Similarly, $\tilde{A}$ is the location of the head in cone $C_{j+1},$ and the polygon lies in $H_{\tilde{i},j+1}^{\tilde{\varepsilon}}.$ By our prior discussion, we know that $\tilde{A}$ is $A + C \cdot \vec{a}_j + \vec{h},$ where $C$ is some constant and $\vec{h}$ is some head correction vector. 

To find the precise values of $C$ and $\vec{h},$ we partition each truncated strip $H_{i,j}^\pm$ into regions $\pi_{i, j, m}^\pm$ for $m\in \mathbb{Z}^+.$ The region $\pi_{i, j, m}^\pm$ is the set of heads in $H_{i,j}^\pm$ such that $f_j$ is equal to $m$ iterations of the refractive necklace map. In the following proposition, we describe these regions.

\begin{proposition}
\label{prop: partition}
On ray $R_j,$ take the infinite sequence of points that have distance $\Lambda_{i+1}\vec{d_j},\, \Lambda_{i+2}\vec{d_j},\, \dots$ away from each other, starting at the origin. Since the indices are viewed modulo $k$, the sequence of gaps has period $k$. Now, draw a line parallel to $R_{j+1}$ through each point. Taking intersections of these lines with the ray $R_j + \Lambda_i\vec{b}_j,$ we obtain a periodic sequence of parallelograms. 

The region $\pi_{i,j,m}^\pm$ is the intersection of the $m$-th parallelogram with the truncated strip $H_{i,j}^\pm$. Note that $\pi_1$ may not be a parallelogram since $H_{i,j}^\varepsilon$ is a truncated strip.
\end{proposition}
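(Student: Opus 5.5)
We will track a single polygon $Q\in\mathcal{S}^\pm_{i,j}$ with head $A\in H^\pm_{i,j}$ through successive necklace steps while it stays inside $C_j$. By the cone lemma of Gutkin, each polygon inside $C_j$ has head and tail joined by a translate of the necklace vector. In the refractive setting we will use the scaled version: a polygon of scale $\Lambda_p$ has head minus tail equal to $\Lambda_p\vec{a}_j$. Now $P_{p+1}$ is obtained from $P_p$ by reflecting about the head and scaling by $1/\lambda_{p+1}$, so the tail of the new polygon is the old head. Its head is therefore the old head plus $\Lambda_{p+1}\vec{a}_j$, since $\Lambda_p/\lambda_{p+1}=\Lambda_{p+1}$. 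Starting from $Q$ at scale $\Lambda_i$, the heads after $1,2,\dots$ steps are
\[
A+\Lambda_{i+1}\vec{a}_j,\quad A+(\Lambda_{i+1}+\Lambda_{i+2})\vec{a}_j,\quad \dots,
\]
for as long as the polygons remain inside $C_j$, i.e.\ until one meets $R_{j+1}$.

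The key step is to convert the exit condition into a linear condition on $A$. We decompose along the two (non-parallel) directions of $R_j$ and $R_{j+1}$, using $\vec{a}_j=\vec{b}_j-\vec{d}_j$ with $\vec{b}_j\parallel R_{j+1}$ and $\vec{d}_j\parallel R_j$. In this decomposition, adding $s\,\vec{a}_j$ moves a point by $-s\vec{d}_j$ in the $R_j$-coordinate, measured along lines parallel to $R_{j+1}$. The strip $H^\pm_{i,j}$ lies between $R_j$ and $R_j+\Lambda_i\vec{b}_j$, consistent with the strip lemma. The polygon after $m$ steps first meets $R_{j+1}$ exactly when the accumulated $R_j$-coordinate of $A$, reduced by $(\Lambda_{i+1}+\dots+\Lambda_{i+m})\vec{d}_j$, crosses zero. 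Here the scale-$\Lambda_p$ geometry of a polygon touching $R_{j+1}$ is handled by the scaled strip lemma, $H^\pm_{i,j}=\Lambda_iH^\pm_{0,j}$. Hence the set of $A$ requiring exactly $m$ steps consists of the points whose $R_j$-coordinate, measured parallel to $R_{j+1}$, lies between the partial sums $\sum_{l=1}^{m-1}\Lambda_{i+l}|\vec{d}_j|$ and $\sum_{l=1}^{m}\Lambda_{i+l}|\vec{d}_j|$. That band is exactly the region between consecutive lines parallel to $R_{j+1}$ through the marked points, so intersecting with the strip gives the $m$-th parallelogram.

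The gaps $\Lambda_{i+l}|\vec{d}_j|$ are periodic in $l$ with period $k$, because $\Lambda_{l+k}=\Lambda_l$ when $\prod\lambda=1$. Therefore the parallelograms repeat periodically. The first one is cut by the polygonal boundary of the truncated strip, so $\pi_1$ need not be a parallelogram, and $\pi^\pm_{i,j,m}$ is the stated intersection.

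The main obstacle is the exit criterion: justifying that "the $m$-th polygon meets $R_{j+1}$" is equivalent to the head crossing a specific line parallel to $R_{j+1}$, offset by exactly the triangle vector $\vec{d}_j$ at the current scale. This is where the $\pm$ orientation and the varying scale enter. I would prove it by first verifying it at scale $1$ with orientation $\pm$, mirroring Gutkin's argument on pp.~441--442, and then transporting it by the dilation $\Lambda_p$, which preserves rays through $o$ and lines parallel to them.
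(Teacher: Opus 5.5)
Your proposal is correct and follows essentially the paper's argument. The heads advance by $\Lambda_{i+1}\vec{a}_j,\Lambda_{i+2}\vec{a}_j,\dots$, and cutting the strip by lines parallel to $R_{j+1}$ through the partial sums on $R_j$ gives the regions $\pi^\pm_{i,j,m}$; you additionally spell out the $(\vec{d}_j,\vec{b}_j)$-coordinate bookkeeping and the exit criterion, which the paper's two-line proof takes for granted. That criterion also needs no rescaling from Gutkin's case: the line through the vertex serving as head in $C_j$ (located at $A+(\Lambda_{i+1}+\dots+\Lambda_{i+m})\vec{a}_j$) and parallel to $R_{j+1}$ supports the polygon, so the polygon meets $R_{j+1}$ exactly when this vertex has crossed the line through $o$ containing $R_{j+1}$, whatever the scale or orientation.
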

\begin{proof}
    This follows from the fact that the first iteration of a head in $H_{i, j}^\pm$ adds $\Lambda_{i+1}\vec{a_j}$ to $P,$ the second iteration adds $\Lambda_{i+2}\vec{a_j},$ and so on. Partitioning the truncated strip via lines parallel to $R_{j+1}$ through these points, we get the construction above. 
\end{proof}

Indeed, using this partition, we can completely describe the next cone map $f_j.$ 

\begin{lemma}\label{lem: nextCone}
    Let $f_j: H_j \to H_{j+1}$ be the next cone map on cone $C_j$. Using the tuple notation above, $f_j$ maps $(A, i, \varepsilon)$ to $(\tilde{A}, \tilde{i}, \tilde{\varepsilon}).$ If $A$ lies in $\pi_{i, j, m}^\varepsilon$, we have the following:
    \begin{equation}
        \begin{cases}
            \tilde{A} = A + \big(\Lambda_{i+1} + \Lambda_{i+2} + \dots + \Lambda_{i+m} \big)\cdot \vec{a} + \vec{h}_i^{\tilde{\varepsilon}}, \\
            \tilde{\varepsilon} = (-1)^m\varepsilon, \\
            \tilde{i} \equiv i+m \bmod k.
        \end{cases}
    \end{equation}
\end{lemma}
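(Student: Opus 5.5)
The plan is to track a single polygon $Q\in\mathcal{S}_{i,j}^\varepsilon$ through the $m$ iterations of the refractive necklace map that make up $f_j$. Three things must be checked: the scale index, the orientation, and the displacement of the head.

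\emph{Scale index and orientation.} Each step applies a point reflection about the current head, followed by a scaling $1/\lambda_\ell$. A point reflection in the plane is rotation by $\pi$, and composing two of them gives a translation. Since the paper's bookkeeping counts reflections as orientation-reversing, each step flips $\varepsilon$. After $m$ steps this gives $\tilde\varepsilon=(-1)^m\varepsilon$. Each step also multiplies the scale by one further factor $1/\lambda$. A polygon of scale $\Lambda_i$ therefore becomes one of scale $\Lambda_{i+1}$, then $\Lambda_{i+2}$, and so on. Since $\prod\lambda=1$, the $\Lambda$'s are $k$-periodic, so $\tilde i\equiv i+m \pmod k$.

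\emph{Displacement.} While the polygon stays inside cone $C_j$, Gutkin's lemma (applied to the scaled copies) says the tail-to-head vector of any polygon of scale $\Lambda$ is $\Lambda\vec a_j$. I will argue by induction on the number of steps. The polygon $P_\ell$ of scale $\Lambda_{i+\ell}$ has head $A_{\ell+1}$. Reflecting about $A_{\ell+1}$ and rescaling by $1/\lambda_{i+\ell+1}$ about it fixes $A_{\ell+1}$, which becomes the tail of $P_{\ell+1}$. The new head is then $A_{\ell+1}+\Lambda_{i+\ell+1}\vec a_j$, because the tail-to-head vector of $P_{\ell+1}$ is its scale times $\vec a_j$.

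Summing over the $m$ steps, the head in $C_j$ of the $m$-th polygon is $A+(\Lambda_{i+1}+\dots+\Lambda_{i+m})\vec a_j$. This is exactly the quantity that defines the partition in Proposition~\ref{prop: partition}. The hypothesis $A\in\pi^\varepsilon_{i,j,m}$ means, by that proposition, that the polygon first meets $R_{j+1}$ precisely at the $m$-th step. So after $m-1$ steps we are still in $\mathcal{S}_{R_j}$, and the induction is legitimate up to that point.

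\emph{Head correction.} The final polygon $Q'$ lies in $\mathcal{S}_{\tilde i}^{\tilde\varepsilon}$ and touches $R_{j+1}$. Its head with respect to cone $C_{j+1}$ need not be the vertex used as head in $C_j$. However, both are vertices of the same polygon, which is a translate of a fixed model: $P$ scaled by $\Lambda_{\tilde i}$ with orientation $\tilde\varepsilon$. Hence their difference is a fixed vector determined by the scale, the orientation, and the cone. This difference is the head correction $\vec h$. Adding it gives the formula for $\tilde A$. This is the same translation-equivariance used in defining the head functions $h^\pm_{i,j}$.

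\emph{Main obstacle.} The delicate step is this last one: making precise that the vertex serving as head in $C_j$ and the head in $C_{j+1}$ differ by a vector independent of $A$ within the region. It requires that the combinatorial type (which vertex is head in each cone) be constant on $\mathcal{S}^{\tilde\varepsilon}_{\tilde i,j+1}$, which is Gutkin's lemma on well-defined heads applied to scaled copies. A secondary check is the boundary bookkeeping: the polygon must be inside $C_j$ at every intermediate step, so that $\vec a_j$ is the correct necklace vector throughout. This follows from the definition of the parallelogram regions.
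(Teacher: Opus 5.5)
Your proposal is correct and follows the same route as the paper's proof. You track the scales $\Lambda_{i+1},\dots,\Lambda_{i+m}$ of the $m$ successive polygons, sum the corresponding multiples of $\vec a_j$, add the head correction, and read off $\tilde\varepsilon$ and $\tilde i$ from the alternating sign and the cyclic index; you give more detail than the paper (the ``new tail = previous head'' induction, and the correction depending on the final scale $\Lambda_{\tilde i}$ and sign $\tilde\varepsilon$), with one wording slip: the intermediate polygons $P_1,\dots,P_{m-1}$ lie inside $C_j$ rather than in $\mathcal{S}_{R_j}$, and being inside $C_j$ is all your induction actually uses.
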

\begin{proof}
    Since our polygon has scale $\Lambda_i$, the first iteration of the refractive necklace map gives a polygon of scale $\Lambda_{i+1}$. Repeating $m$ times, we get a total of $m$ polygons with scales $\Lambda_{i+1}, \dots, \Lambda_{i+m}.$ Multiply this constant by $\vec{a},$ then add the head correction vector $\vec{h}_i^{\tilde{\varepsilon}}$ to get the first part. The other two parts follow immediately from the definition of $\pi_k$.
\end{proof}

\begin{corollary}
Recall the definition of $\vec{b}, \vec{d}$ from Proposition~\ref{prop: partition}. Writing $\Lambda = 1 + \Lambda_1 + \dots + \Lambda_{k-1},$
the following relation holds: 
\[
f_j \Big(A + 2\Lambda \vec{d_j}\;\Big) = f_j(A) + 2\Lambda\vec{b_j}.
\]
\end{corollary}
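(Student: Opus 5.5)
Shifting the head $A$ by $2\Lambda\vec{d}_j$ along $R_j$ moves it across exactly $2k$ consecutive parallelograms of Proposition~\ref{prop: partition}. The shifted head therefore lands in the "same" region $2k$ steps later, with the same scale index and orientation. The plan is to verify this and then read off the image from Lemma~\ref{lem: nextCone}.

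\emph{Step 1: a full period of parallelograms has length $2\Lambda\vec{d}_j$.} By Proposition~\ref{prop: partition}, the parallelograms partitioning the strip have widths along $R_j$ equal to $\Lambda_{i+1}\vec{d}_j, \Lambda_{i+2}\vec{d}_j, \dots$, and these widths are periodic in the index with period $k$. Since $\prod\lambda_i=1$, the set $\{\Lambda_{i+1},\dots,\Lambda_{i+k}\}$ equals $\{\Lambda_0,\dots,\Lambda_{k-1}\}$, with $\Lambda_0=1$. Hence $k$ consecutive parallelograms span $\Lambda\vec{d}_j$, and $2k$ of them span $2\Lambda\vec{d}_j$. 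We use $2k$ rather than $k$ so that the parity $(-1)^m$ of the orientation change is preserved even when $k$ is odd.

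\emph{Step 2: the shift is a bijection between regions.} If $A\in\pi_{i,j,m}^\varepsilon$, then $A+2\Lambda\vec{d}_j\in\pi_{i,j,m+2k}^\varepsilon$, provided both points lie in the truncated strip $H_{i,j}^\varepsilon$. The strip is bounded by two rays parallel to $R_j$, so translation along $R_j$ maps the strip into itself away from its polygonal end. The only place needing care is the first region $\pi_1$, which may be cut by the polygonal line. Here I would argue that the translate of any point of the strip stays in the strip, because the strip is invariant under translations in the positive $\vec{d}_j$ direction. Being invariant in that direction is what matters: the image lands in a full parallelogram whose region label is shifted by $2k$.

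\emph{Step 3: apply Lemma~\ref{lem: nextCone}.} The new $m'=m+2k$ gives:
\begin{itemize}
\item $\tilde i\equiv i+m+2k\equiv i+m$, so the scale index is unchanged;
\item $\tilde\varepsilon=(-1)^{m+2k}\varepsilon=(-1)^m\varepsilon$, so the orientation is unchanged;
\item the correction vector $\vec h$ is the same, since it depends only on the scale, orientations and cone.
\end{itemize}
The coefficient of $\vec a_j$ gains $\Lambda_{i+m+1}+\dots+\Lambda_{i+m+2k}=2\Lambda$. Therefore
\[
f_j(A+2\Lambda\vec{d}_j)=f_j(A)+2\Lambda\vec{d}_j+2\Lambda\vec{a}_j=f_j(A)+2\Lambda\vec{b}_j,
\]
using $\vec{b}_j=\vec{d}_j+\vec{a}_j$.

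The main obstacle is Step 2. It requires checking that the indexing of regions $\pi_{i,j,m}$ (in particular the possibly truncated $\pi_1$) really shifts by exactly $2k$ under the translation, and that the head in $C_{j+1}$ is then correctly described by the same correction vector. I would handle this by reasoning directly with necklace iterates rather than region labels. The polygon with head $A+2\Lambda\vec d_j$ is the translate of the polygon with head $A$, and after $2k$ additional necklace steps it reaches the exact translate, by $2\Lambda\vec b_j$, of the $m$-th iterate of the original. From there the two trajectories leave the cone at the same combinatorial moment.
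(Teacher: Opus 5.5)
Your proposal is correct and follows the paper's own proof. Shifting by $2\Lambda\vec{d}_j$ carries $A\in\pi_{i,j,m}^\varepsilon$ into $\pi_{i,j,m+2k}^\varepsilon$, so Lemma~\ref{lem: nextCone} returns the same $\tilde i$, $\tilde\varepsilon$ and correction vector, and the extra $2\Lambda\vec{a}_j$ combines with the shift via $\vec{d}_j+\vec{a}_j=\vec{b}_j$; your check that the strip is invariant under positive translation along $R_j$ and your explicit $2\Lambda$ sum just spell out what the paper leaves implicit.
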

\begin{proof}
    Suppose $A$ lies in $\pi_{i, j,m}^\varepsilon$. By construction of $\pi$, it follows that $A+2\Lambda\vec{d_j}$ lies in $\pi_{m+2k},$ so by Lemma~\ref{lem: nextCone} the head positions agree. Note that the head correction vector is also identical. 
    Moreover, $(-1)^{m+2k}\varepsilon  = (-1)^m \varepsilon, $ and $i+m \equiv i+m+2k \pmod k.$ 
\end{proof}

Now, we introduce the \textit{first return map} $F$. 

\begin{definition}
    Consider a polygon with head in $H_1$. The \textit{first return map} $F= f_{2n}\circ \dots \circ f_1$ represents the head of the first polygon that intersects $R_1$ again after completing a ``full loop.'' In the same sense as the next cone map $f,$ the function $F:H_1 \to H_{2n+1} = H_1$ maps a tuple $(A,i, \varepsilon)$ to $(\tilde{A}, \tilde{i}, \tilde{\varepsilon}).$ 
\end{definition}

With the first return map defined, we can finally start analyzing the structure of orbits. Note that the first return map is invertible. 
\begin{proposition}\label{firstReturnPeriodic}
    Let the polygon $P$ be quasi-rational. Then there exists $N\in\mathbb{Z}^+$ such that 
    \[F(x+2N\Lambda\vec{d}_1) = F(x) + 2N\Lambda\vec{d}_1. \]
\end{proposition}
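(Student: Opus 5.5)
The plan is to chain the Corollary through all $2n$ cones, exactly as in Gutkin's quasi-periodicity argument. The Corollary says that each next cone map $f_j$ intertwines translation by $2\Lambda\vec d_j$ on $H_j$ with translation by $2\Lambda\vec b_j$ on $H_{j+1}$. Iterating it, for any integer $N_j\ge 1$ we get
\[
f_j\bigl(A+2N_j\Lambda\vec d_j\bigr)=f_j(A)+2N_j\Lambda\vec b_j .
\]
To compose, the output translation in cone $C_j$ must equal the input translation admissible in cone $C_{j+1}$. So we need positive integers $N_1,\dots,N_{2n}$ with
\[
N_j\vec b_j=N_{j+1}\vec d_{j+1}\quad (1\le j\le 2n),\qquad N_{2n+1}=N_1 .
\]
If such integers exist, then composing gives $F(x+2N_1\Lambda\vec d_1)=F(x)+2N_{2n}\Lambda\vec b_{2n}=F(x)+2N_1\Lambda\vec d_1$, using $\vec b_{2n}\parallel R_1$. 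We also need $N_{2n}\vec b_{2n}=N_1\vec d_1$, which is the closing condition, and we set $N=N_1$.

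The key step is to identify the ratios $|\vec b_j|/|\vec d_{j+1}|$ with quantities from the necklace polygon $Q$. Both $\vec b_j$ and $\vec d_{j+1}$ lie on $R_{j+1}$, so $\vec b_j=t_j\vec d_{j+1}$ for a positive real $t_j$. The triangle $(\vec d_j,\vec a_j,\vec b_j)$ is the triangle $o,A_j,A_{j+1}$ of the necklace polygon, rescaled so that its side along $\vec a_j$ is exactly $\vec a_j$. In $Q$ the side $A_jA_{j+1}$ equals $r_j\vec a_j$, so the actual triangle is $r_j$ times the normalized one. Hence $\overrightarrow{oA_j}=r_j\vec d_j$ and $\overrightarrow{oA_{j+1}}=r_j\vec b_j$. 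Comparing the two expressions for $\overrightarrow{oA_{j+1}}$ (from cones $C_j$ and $C_{j+1}$) gives $r_j\vec b_j=r_{j+1}\vec d_{j+1}$, so $t_j=r_{j+1}/r_j$. The closing condition $A_{2n+1}=A_1$ (Gutkin's lemma) makes this consistent around the loop.

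With this, the condition $N_j\vec b_j=N_{j+1}\vec d_{j+1}$ becomes $N_j r_{j+1}/r_j=N_{j+1}$, i.e.\ $N_j/r_j$ must be constant in $j$. So we want $N_j=c\,r_j$ for a single real $c>0$ making every $c\,r_j$ a positive integer. Quasi-rationality says exactly that $(r_1:\dots:r_n)\in\mathbb{QP}^{n-1}$. Together with $r_{n+i}=r_i$, all $r_j/r_1$ are rational, so we can take $c$ to be $1/r_1$ times a common denominator. The $N_j$ are then positive integers and periodic in $j$ with period $2n$, so $N_{2n+1}=N_1$ holds automatically.

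I expect the main obstacle to be the bookkeeping in identifying $\vec d_j,\vec b_j$ with $\overrightarrow{oA_j}/r_j$ and $\overrightarrow{oA_{j+1}}/r_j$. The necklace polygon is only defined up to translation and dilation (by the Remark), so I would fix $o$ at the origin and $A_1\in R_1$ to make the comparison literal.

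A second point to check is that the Corollary is stated uniformly over the tuple data $(i,\varepsilon)$. This is needed because under iteration the strip index and orientation of $x$ and of $x+2N_j\Lambda\vec d_j$ must agree at each stage. The Corollary's proof already shows $i$ and $\varepsilon$ are preserved, so the composition goes through with the same strip labels, and $F$ commutes with the translation on each component $H_{i,1}^{\pm}$ separately.
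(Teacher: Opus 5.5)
Your proposal is correct and follows essentially the same route as the paper. The paper also uses $\overrightarrow{oA_{j+1}} = r_j\vec b_j = r_{j+1}\vec d_{j+1}$, rescales the necklace polygon so that $r_j = \Lambda n_j$ with $n_j$ integers (playing the role of your constant $c$), and chains the Corollary as $f_j(x+2n_j\Lambda\vec d_j) = f_j(x) + 2n_{j+1}\Lambda\vec d_{j+1}$ through all $2n$ cones to get $N = n_1$. Your version additionally spells out two points the paper leaves implicit: the identification $\vec d_j = \overrightarrow{oA_j}/r_j$, $\vec b_j = \overrightarrow{oA_{j+1}}/r_j$, and the role of the closing lemma $A_{2n+1}=A_1$.
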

\begin{proof}

    We have $oA_{j+1} =r_j\vec{b_j} = r_{j+1}\vec{d}_{j+1}.$ If the polygon $P$ is quasi-rational, then we can assume that \[r_j = \Lambda n_j, n_j\in \mathbb{N}\; (1\leq j \leq 2n).\]
    Now, applying the identity 
    \[
    f_j\Big(A + 2\vec{d}_j\Lambda\Big) = f(A) + 2\vec{b}_j\Lambda,
    \] we get
    \[
    f_j(x+2n_j\Lambda \vec{d}_j) = f_j(x) + 2n_j\Lambda \vec{b}_j = f_j(x) + 2n_{j+1}\Lambda \vec{d}_{j+1}.
    \]
    Iterating for $j=1,\dots, 2n,$ we get 
    \[
    (f_{2n}\circ \dots \circ f_2\circ f_1)(x+2n_1\Lambda \vec{d}_1)= (f_{2n}\circ \dots \circ f_2\circ f_1)(x) + 2n_1\Lambda \vec{d}_1,
    \] so our proof is complete: set $N=n_1.$
\end{proof}

\begin{corollary}
    Let $\vec{p}:=2n_1\Lambda \vec{d}_1,$ and choose $k\in \mathbb{Z}$, $x\in H_1.$ If the point $x+k\vec{p}$ lies in $H_1,$ then 
    \[
    F(x+k\vec{p}) = F(x) + k\vec{p}.
    \]
\end{corollary}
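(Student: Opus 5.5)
The plan is to derive the corollary from Proposition~\ref{firstReturnPeriodic} by induction on $|k|$. The proposition gives $F(x+\vec{p}) = F(x)+\vec{p}$ whenever both sides make sense, with $\vec{p} = 2N\Lambda\vec{d}_1$ and $N=n_1$. The only subtlety is the hypothesis that $x+k\vec{p}\in H_1$: we must make sure every intermediate point $x+t\vec{p}$, for $t$ between $0$ and $k$, also lies in $H_1$, so that $F$ is defined at each stage.

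First I would note that $\vec{d}_1$ points along $R_1$. Each truncated strip $H_{i,1}^\pm$ is bounded by $R_1$, by a parallel ray $\tilde R_1\subseteq R_1+\Lambda_i\vec{b}_1$, and by a polygonal line near the apex. It is therefore convex in the direction of $\vec{d}_1$, and unbounded toward $+\vec{d}_1$. Concretely, if $y$ and $y+s\vec{d}_1$ with $s>0$ both lie in the same strip, then so does every point of the segment between them. Moreover, a translate of $y$ by a positive multiple of $\vec{d}_1$ stays in the strip, because the polygonal boundary only cuts off the part near the origin. This gives the following for $k>0$. Since $x\in H_1$ and translation by $\vec{p}$ preserves the strip label $(i,\varepsilon)$ (as in the corollary to Lemma~\ref{lem: nextCone}, translation by multiples of $2\Lambda\vec{d}_1$ changes neither $i$ nor $\varepsilon$), every $x+t\vec{p}$ with $0\le t\le k$ lies in $H_1$. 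For $k<0$, set $y = x+k\vec{p}\in H_1$ and apply the same argument from $y$ with $-k>0$, which yields $F(x) = F(y)+(-k)\vec{p}$, i.e.\ the claim.

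The induction itself is routine. Assume $F(x+t\vec{p}) = F(x)+t\vec{p}$ with $x+t\vec{p},\,x+(t+1)\vec{p}\in H_1$. Proposition~\ref{firstReturnPeriodic} applied at the point $x+t\vec{p}$ gives $F(x+(t+1)\vec{p}) = F(x+t\vec{p})+\vec{p} = F(x)+(t+1)\vec{p}$.

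I expect the main obstacle to be justifying that Proposition~\ref{firstReturnPeriodic} applies at every point $z\in H_1$ with $z+\vec{p}\in H_1$. Its proof chains the identities $f_j(A+2n_j\Lambda\vec{d}_j) = f_j(A)+2n_{j+1}\Lambda\vec{d}_{j+1}$, and this requires that the intermediate images $f_j\circ\dots\circ f_1(z)+2n_{j+1}\Lambda\vec{d}_{j+1}$ remain in the domains $H_{j+1}$. Here I would use the same convexity-in-direction argument in each cone. By Proposition~\ref{prop: partition}, translating by a multiple of $2\Lambda\vec{d}_j$ maps the partition piece $\pi_{i,j,m}$ onto $\pi_{i,j,m+2kN'}$. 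Hence the translated point automatically lies in a legitimate region of $H_{i,j}^\varepsilon$ away from the truncated end, and the corresponding translated image lies in $H_{j+1}$.
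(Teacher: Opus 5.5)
Your proposal is correct and follows the paper, which gives no separate argument for this corollary and treats it as the $|k|$-fold iteration of Proposition~\ref{firstReturnPeriodic}; your induction, the reduction of $k<0$ to $k>0$ via $y=x+k\vec{p}$, and the observation that each $H_{i,j}^{\varepsilon}$ is closed under translation by positive multiples of $\vec{d}_j$ (so every intermediate point stays in the domain) make explicit what the paper leaves implicit. One minor imprecision: for the truncated initial pieces, translation by $2\Lambda\vec{d}_j$ sends $\pi_{i,j,m}$ into, not onto, $\pi_{i,j,m+2k}$, and into is all your argument needs.
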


Using the corollary above, we can reduce the entire first return map to a \textit{fundamental domain} $\Pi,$ which is the disjoint union of all $\pi_{i,1,m}^\pm$ with bottom parts inside some choice of $\vec{p}$ in $R_1.$

Thus, we can redefine $F$ as follows:
\begin{definition}
    Let $\Phi:\Pi\to\Pi$ and $\tau:\Pi\to\mathbb{Z}$ be functions such that $F(x) = \Phi(x) + \tau(x)\vec{p}.$ 
\end{definition}

We have the following three results of Gutkin that we can apply. 

\begin{lemma}[Gutkin \protect{\cite[p. 445]{Gutkin}}]
    The pair $(\Phi, \tau)$ uniquely determines the first return map $F$. The mapping $\Phi$ is invertible and $\Phi, \Phi^{-1}: \Pi \rightarrow \Pi$ are local translations.
\end{lemma}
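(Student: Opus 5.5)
We need to show three things: that $(\Phi,\tau)$ determines $F$, that $\Phi$ is invertible, and that $\Phi$ and $\Phi^{-1}$ are local translations. The approach is to reduce all of them to the structure of the next cone maps $f_j$ from Lemma~\ref{lem: nextCone}, together with the translation covariance of $F$ by $\vec p$ from the preceding corollary.

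\emph{Determination of $F$.} By the corollary, $F(x+k\vec p)=F(x)+k\vec p$ whenever both points lie in $H_1$. Every $y\in H_1$ can be written as $y=x+k\vec p$ with $x\in\Pi$ and $k\in\mathbb{Z}$, and this $x$ is unique, because $\Pi$ is a fundamental domain for translation by $\vec p$ along $R_1$; the truncated strips are invariant under this translation far enough along the strip. Hence $F(y)=\Phi(x)+(\tau(x)+k)\vec p$, so $F$ is recovered from $(\Phi,\tau)$. Here $\Phi(x)$ and $\tau(x)$ are defined by reducing $F(x)$ modulo $\vec p$ in the same way. This reduction is well defined because $F(x)\in H_1$ and the three tuple components are preserved. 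One subtlety is the first region $\pi_1$ near the truncation. I would handle it by choosing $\Pi$ far enough out along $R_1$ that every region it contains is a full parallelogram slice. Alternatively, I would note that the polygonal boundary part lies in a bounded set, so only finitely many exceptional translates occur, and these can be absorbed into the definition.

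\emph{Local translations.} Lemma~\ref{lem: nextCone} says that on each region $\pi^\varepsilon_{i,j,m}$ the map $f_j$ is $A\mapsto A+c\,\vec a_j+\vec h$, with $c$ and $\vec h$ constant on that region. So $f_j$ is a translation on each piece of a partition of $H_j$ into countably many regions. Composing over $j=1,\dots,2n$, the map $F$ is a translation on each cell of the common refinement. Subtracting the locally constant integer multiple $\tau(x)\vec p$ keeps this true, so $\Phi$ is a piecewise translation. Only finitely many cells meet the fundamental domain $\Pi$, since $\Pi$ has bounded length along $R_1$ and the parallelogram widths are bounded below by $\min_i\Lambda_i|\vec d_j|$. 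Therefore $\Phi$ is a local translation with finitely many pieces, up to the measure-zero set of non-strongly-regular points.

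\emph{Invertibility.} $F$ is invertible because the refractive necklace map is invertible: reflect about the tail and scale by $\lambda$. Running the same construction backwards gives previous cone maps $f_j^{-1}$, which by the symmetric argument are also piecewise translations. Then $F^{-1}$ commutes with translation by $\vec p$, so $\Phi^{-1}(y)=F^{-1}(y)-\tau'(y)\vec p$ is well defined on $\Pi$ and is a local translation. It is a genuine inverse because $\Phi(x)\equiv F(x)$ modulo $\vec p$ and $F^{-1}$ respects classes modulo $\vec p$.

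I expect the main obstacle to be the first step. Gutkin's truncated strips are not exactly invariant under $\vec p$ near their polygonal boundary, and in the refractive setting there are $2k$ strips of different scales. So one must check carefully that each $H^\pm_{i,1}$, as a scaled copy of $H^\pm_{0,1}$, still admits $\Pi$ as a fundamental domain. I would first try to show directly that $\vec p$ is a multiple of every $\Lambda_i$-scaled period: $2\Lambda$ is the full period of the gap sequence $\Lambda_{i+1},\Lambda_{i+2},\dots$ for every $i$, which makes this plausible.
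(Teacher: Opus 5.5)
Your argument is correct and is essentially the standard one. The paper gives no proof of its own here, only the citation to Gutkin, and your reconstruction follows that route: $f_j$ is a translation on each $\pi^{\varepsilon}_{i,j,m}$, so $F$ is a finite piecewise translation on a bounded slab; $\vec p$-equivariance lets $F$ descend to $\Phi$; and invertibility of $F$, together with the equivariance of $F^{-1}$ (which follows from that of $F$), gives invertibility of $\Phi$. Taking $\Pi$ to be a full slab far out along $R_1$, and reaching heads near the truncation through the equivariance with negative $k$, is the right fix for the gaps that translates of a slab containing the truncated $\pi_1$ would leave. Two points need tightening: $\tau$ is locally constant only after refining the cells by the $F$-preimages of the translated slab boundaries, and it is the reduction modulo $\vec p$, not $F$, that preserves the labels $(i,\varepsilon)$.
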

\begin{corollary}[Gutkin \protect{\cite[p. 446]{Gutkin}}]
   The first return map $F$ is uniquely determined by the pair of maps $(\Phi: \Pi \to \Pi, \tau: \Pi \to\mathbb{Z}_{\geq -1})$. The function $\tau$ corresponding to an invertible mapping $F$ can take values $-1,0,1$ only.
\end{corollary}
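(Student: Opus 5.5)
The plan has two parts. First I would show that $(\Phi,\tau)$ recovers $F$ on all of $H_1$, using the translation equivariance of the preceding Corollary. Then I would get the lower bound on $\tau$ from the fact that $F$ lands in $H_1$, and the upper bound from surjectivity of $F$, which holds because $F$ is invertible.

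\emph{Uniqueness.} Every $x \in H_1$ can be written uniquely as $x = x_0 + k\vec{p}$ with $x_0 \in \Pi$ and $k \in \mathbb{Z}_{\ge 0}$. This is exactly how $\Pi$ was chosen: it consists of the pieces $\pi^\pm_{i,1,m}$ whose bottom parts lie in one period $\vec{p}$ of $R_1$. By the Corollary following Proposition~\ref{firstReturnPeriodic},
\[
F(x) = F(x_0) + k\vec{p} = \Phi(x_0) + \bigl(\tau(x_0) + k\bigr)\vec{p},
\]
so $F$ is determined by $(\Phi,\tau)$ on all of $H_1$. Invertibility of $\Phi$ and the local-translation property are taken from the preceding Lemma.

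\emph{Lower bound $\tau \ge -1$.} Let $x \in \Pi$, so $F(x) = \Phi(x) + \tau(x)\vec{p}$ with $\Phi(x) \in \Pi$, and $F(x) \in H_1$. I would measure position by the coordinate along $R_1$. Every point of $H_1$ has coordinate at least that of the bottom polygonal boundary of the truncated strips. That boundary spans less than one period $\vec{p}$: the strips are bounded by $R_1$ and $\Lambda_i\vec b_1$-translates, and $\vec p = 2n_1\Lambda\vec d_1$ can be taken large, replacing $n_1$ by a multiple if necessary. Hence $\Phi(x) + \tau\vec{p}$ can lie in $H_1$ only if $\tau \ge -1$. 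The case $\tau = -1$ occurs only when $\Phi(x)$ sits high in $\Pi$ and the translate falls into the lower fringe of the truncated strip.

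\emph{Upper bound $\tau \le 1$.} Now I would use that $F$ is onto. Fix $y \in \Pi$ lying in the lowest cell. Its preimage $F^{-1}(y)$ exists and lies in $H_1$, say $F^{-1}(y) = x + j\vec{p}$ with $x \in \Pi$ and $j \ge 0$. Then
\[
y = \Phi(x) + \bigl(\tau(x) + j\bigr)\vec{p}.
\]
Uniqueness of the decomposition forces $\Phi(x) = y$ and $\tau(x) = -j \le 0$ for points $y$ in the bottom cell. Running the same argument for the possibly partial lowest fringe, and for the cell just above it, gives $\tau(x) \le 1$ for every $x \in \Pi$, because $\Phi$ is a bijection of $\Pi$. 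Equivalently, $F^{-1}$ has the analogous description $(\Phi^{-1}, \tau')$ with $\tau' = -\tau \circ \Phi^{-1}$. The lower-bound argument applied to $F^{-1}$ gives $\tau' \ge -1$, that is, $\tau \le 1$.

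The main obstacle is the careful bookkeeping at the bottom of the truncated strips, where $\Pi$ may be only a partial parallelogram. The clean symmetric argument (apply the bound $\ge -1$ to $F^{-1}$) requires checking that $F^{-1}$ is itself a first-return map of the same type, with the same fundamental domain and period $\vec{p}$. I would verify this by running the next cone maps $f_j^{-1}$ backwards through cones $C_{2n},\dots,C_1$ and invoking the analogous equivariance for $f_j^{-1}$.
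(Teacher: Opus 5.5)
The paper gives no proof of its own here; it imports the statement from Gutkin--Sim\'anyi. Your closing argument is the right idea: $F^{-1}$ is again $\vec p$-equivariant with pair $(\Phi^{-1},-\tau\circ\Phi^{-1})$, so the lower bound applied to $F^{-1}$ gives $\tau\le 1$. However, the bookkeeping around it is inconsistent, and one step fails as written.

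\emph{The decomposition contradicts your lower-bound step.} In the uniqueness paragraph you assert $H_1=\bigsqcup_{k\ge 0}(\Pi+k\vec p)$. If that held, $F(x)=\Phi(x)+\tau(x)\vec p\in H_1$ would force $\tau\ge 0$ directly. There would then be no ``lower fringe'' for a $\tau=-1$ image to land in. The codomain $\mathbb{Z}_{\ge -1}$ in the statement signals precisely that $H_1$ reaches one level below $\Pi$.

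\emph{In the fringe model, your first upper-bound argument breaks.} You write $F^{-1}(y)=x+j\vec p$ with $j\ge 0$ and conclude $\tau\le 0$. But the preimage may lie in the fringe ($j=-1$), and then $\tau(x)=1$. The sentence about ``running the same argument for the fringe and the cell just above it'' does not get you from $\tau\le 0$ on one cell to $\tau\le 1$ everywhere.

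\emph{The choice of $\Pi$ needs care.} The phrase ``this is exactly how $\Pi$ was chosen'' hides a real issue. If the segment of $R_1$ defining $\Pi$ starts at $o$, then $\Pi$ contains truncated cells $\pi^\pm_{i,1,1}$ whose $\vec p$-translates are strictly smaller than the full cells above them. So $\bigcup_{k\ge 0}(\Pi+k\vec p)$ misses part of $H_1$, and $F$ is not recovered there from $(\Phi,\tau)$.

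To repair this, commit to one model. Replace $n_1$ by a multiple, as you suggest, so that $\vec p$ exceeds the extent of every truncating polygonal line. Let $\Pi$ be the full parallelograms one period up, so that $\Pi\subseteq H_1\subseteq\bigsqcup_{k\ge -1}(\Pi+k\vec p)$. Then:
\begin{itemize}
\item The decomposition is unique with $k\ge -1$. $F$ is recovered on the fringe from the equivariance corollary with $k=-1$, i.e.\ $F(x_0-\vec p)=F(x_0)-\vec p$ when $x_0-\vec p\in H_1$.
\item $\tau\ge -1$ follows from $F(H_1)\subseteq H_1$.
\item For the upper bound, given $x\in\Pi$, write $F^{-1}(\Phi(x))=x'+j\vec p$ with $j\ge -1$. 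Uniqueness gives $\Phi(x')=\Phi(x)$, hence $x'=x$ by injectivity of $\Phi$, and $\tau(x)=-j\le 1$. No ``lowest cell'' restriction is needed.
\end{itemize}
Your $F^{-1}$ formulation is the same argument. It does not require rerunning the maps $f_j^{-1}$: you only need $F^{-1}(H_1)=H_1$ and $F^{-1}(y+\vec p)=F^{-1}(y)+\vec p$. Both follow at once from bijectivity and equivariance of $F$ together with $H_1+\vec p\subseteq H_1$.

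If you instead take $\Pi=H_1\setminus(H_1+\vec p)$, your $k\ge 0$ decomposition is correct. The same argument then yields $\tau\equiv 0$, which is stronger than needed.
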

\begin{theorem}[Gutkin \protect{\cite[p. 446]{Gutkin}}]\label{thm: bddperiodic}
    If $F$ is invertible and satisfies the condition in Proposition \ref{firstReturnPeriodic}, then the orbits $\left\{F^k(x):-\infty<k<\infty\right\}$ are bounded. If $F$ is invertible and the translation vectors $\vec{t}(\varepsilon, i, k)$ defining $F$ generate a discrete group, then the orbits of $F$ are periodic.
\end{theorem}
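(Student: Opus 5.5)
This is Gutkin's Theorem on the first return map $F$ written as a map on the disjoint union $\Pi$ of finitely many pieces. The plan is to reproduce his argument: $\Phi$ is a piecewise translation of the fundamental domain, and the cocycle $\tau$ tracks how many periods $\vec p$ the orbit drifts along the ray $R_1$.

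\emph{Boundedness.} The plan is to show that the drift $\sum_{s=0}^{K-1}\tau(\Phi^s x)$ is bounded uniformly in $K$.

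1. Write $F^K(x)=\Phi^K(x)+\bigl(\sum_{s=0}^{K-1}\tau(\Phi^s x)\bigr)\vec p$. This uses the Corollary following Proposition~\ref{firstReturnPeriodic}, which says $F$ commutes with translation by $\vec p$ wherever both sides are defined.

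2. Since $\Pi$ is bounded, $F$-orbits are bounded if and only if these partial sums are bounded.

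3. Use the fact that $F$ is invertible and commutes with translation by $\vec p$. Then $F$ induces a bijection of the quotient $H_1/\langle\vec p\rangle\cong\Pi$, namely $\Phi$. It also permutes the translates $\Pi+t\vec p$, sending the part of $\Pi+t\vec p$ in a given $\Phi$-piece into $\Pi+(t+\tau)\vec p$.

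4. Use the truncated-strip geometry. Each truncated strip is bounded on one end by the polygonal line, and it is a half-strip along $R_1$. Hence the translates $\Pi+t\vec p$ with $t<0$ do not lie in $H_1$, so $t\ge 0$ along every orbit.

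5. Compare areas. An invertible map that preserves area, since it is a local translation, and preserves the half-infinite stack of translates cannot push mass monotonically to infinity. So for any $t_0$, the region $\bigcup_{t\le t_0}(\Pi+t\vec p)$ is mapped into a region of the same area.

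6. Apply this to the forward orbit. Let $M$ be the finite maximal excursion forced by the bounded nonuniform first piece $\pi_1$. Then the forward orbit of $x$ returns into $\bigcup_{t\le M}(\Pi+t\vec p)$.

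7. Conclude using that the map is a piecewise translation with finitely many pieces, each of which shifts $t$ by at most one (the Corollary of Gutkin, which gives $\tau\in\{-1,0,1\}$). Together with invertibility, and with the same argument applied to $F^{-1}$, this bounds the height $t$ along the full orbit.

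8. Transfer the result back. $F$-orbits record exactly the polygons of the necklace that touch $R_1$. Between consecutive returns, the next cone maps $f_j$ move heads by at most a bounded multiple (at most $\max_i\Lambda_i$ times) of the current height, by Lemma~\ref{lem: nextCone}. Hence the whole refractive necklace is bounded, and the correspondence theorem of this section gives bounded $T$-orbits.

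\emph{Periodicity.} Here the key observation is that the discrete-group hypothesis implies the orbit visits only finitely many points, so periodicity follows from invertibility.

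1. All translation vectors are combinations of $\vec a_j$, $\vec h$ and the $\vec d_j$ with coefficients $\Lambda_i$. Therefore $F^K(x)\in x+\Gamma$, where $\Gamma$ is the group generated by the $\vec t(\varepsilon,i,k)$.

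2. If $\Gamma$ is discrete, then $(x+\Gamma)\times\{1,\dots,k\}\times\{\pm\}$ meets any bounded set in a finite set.

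3. The boundedness part confines the orbit to such a bounded set, so the orbit visits only finitely many tuples $(A,i,\varepsilon)$.

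4. By invertibility of $F$, the orbit is therefore purely periodic.

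\emph{Main obstacle.} The hardest step is the boundedness of the cocycle sums, i.e. excluding a steady drift $\sum\tau\to\infty$. First I would try Gutkin's own device: show that $F$ maps the finite-area region below height $t_0$ into itself up to a boundary layer of one period. I would do this by arguing that points entering at height $t_0+1$ must come from height $t_0$, and that invertibility plus area preservation then force equality of fluxes in the two directions.
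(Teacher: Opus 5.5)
Your periodicity half is fine once boundedness is known. The boundedness half, however, has a genuine gap exactly where you place the ``main obstacle'' (Steps 5--7).

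Area preservation and balance of up- and down-fluxes across a level are averaged statements. They hold equally for a skew product $(y,t)\mapsto(\Phi y,\,t+\tau(y))$ with mean-zero $\tau$, and such maps can have unbounded orbits. An example is an irrational rotation with $\tau=\pm1$ on complementary half-circles, whose cocycle sums are unbounded by Kesten's theorem. So no argument built only from area and flux can bound an individual orbit.

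Step 6 is also unsupported. Poincar\'e recurrence fails on an infinite-measure half-strip. Even infinitely many returns to $\bigcup_{t\le M}(\Pi+t\vec p)$, combined with $|\tau|\le 1$, would not bound the excursions between returns. Step 7 only controls single steps, not accumulated drift.

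The missing idea is to use invertibility and the commutation relation exactly, on all of $H_1$ including the truncated end, rather than only to put $F^K$ in cocycle form. Translation by $\vec p$ points along $R_1$, so it carries each truncated strip into itself and $H_1+\vec p\subseteq H_1$. Also $F(x+\vec p)=F(x)+\vec p$ for every $x\in H_1$. Since $F$ is a bijection of $H_1$,
\[
F(H_1+\vec p)=F(H_1)+\vec p=H_1+\vec p .
\]
Hence $F$ maps the bounded layer $H_1\setminus(H_1+\vec p)$ onto itself. By induction, each layer $(H_1+t\vec p)\setminus(H_1+(t+1)\vec p)$ with $t\ge 0$ is $F$-invariant. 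Every orbit is therefore confined to a single bounded layer; in your notation, the partial sums $\sum_{s<K}\tau(\Phi^s x)$ stay in a fixed finite window, uniformly in $K$.

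The paper gives no proof of this statement and simply cites Gutkin--Sim\'anyi. This layer invariance is the standard argument behind it. With it, your Steps 5--7 can be dropped, and your periodicity argument (discreteness of the group, finitely many visited tuples, invertibility) goes through as written.
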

Thus we obtain the boundedness theorem for the refractive outer billiards system:
\begin{theorem}\label{thm: bdd}
    Let $T$ be the refractive outer billiard mapping about a polygon $P$. If $P$ is quasi-rational
then the orbits of $T$ are bounded.
\end{theorem}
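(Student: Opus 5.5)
The plan is to reduce boundedness of the orbit of $T$ to boundedness of the refractive necklace, and then to boundedness of the orbit of the first return map $F$ on $H_1$. The correspondence theorem above says the orbit $\{T^\ell(o)\}$ is bounded if and only if the refractive necklace $\{P_\ell\}$ is bounded. So it suffices to show that $\bigcup_\ell P_\ell$ lies in a bounded region, for every strongly regular $o$.

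\textbf{Step 1: reduction to $F$.} Fix a strongly regular $o$ and place it at the origin. The necklace visits the cones $C_1,\dots,C_{2n}$ in cyclic order, and each passage through a cone $C_j$ is encoded by one application of the next cone map $f_j$. Every $P_\ell$ belongs to some passage, which starts at a head in some $H_j$. That head is obtained from a point of the $F$-orbit in $H_1$ by applying at most $2n-1$ maps $f_j$. I will check two things.

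\begin{enumerate}
\item Each $f_j$ moves a head by a bounded amount relative to its input. Over the region $\pi^\varepsilon_{i,j,m}$, Lemma~\ref{lem: nextCone} says the displacement is $(\Lambda_{i+1}+\dots+\Lambda_{i+m})\vec a_j+\vec h$.
\item Within one passage, the number $m$ of necklace steps is controlled by the distance of the head from $o$. Each step displaces the polygon by at most $\max_i \Lambda_i \,|\vec a_j|$, and the polygons have uniformly bounded diameter, since there are only $k$ scales.
\end{enumerate}

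Together these give
\[
\sup_\ell d(o,P_\ell)\le C\bigl(1+\sup_{r\in\mathbb Z}|F^r(x)|\bigr),
\]
where $x\in H_1$ is the head of the first polygon of the necklace in $C_1$. The constant $C$ depends only on $P$ and the $\lambda_i$.

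This works because the strips $H_{i,j}^\pm$ lie within bounded distance of $R_j$ in the direction transverse to it. By the truncated strip lemma, they are bounded by $R_j$ and a parallel ray. Consequently a head at distance $D$ from $o$ yields a polygon in cone $C_{j+1}$ at distance $O(D)$, and linear distortion over $2n$ steps is harmless.

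\textbf{Step 2: boundedness of $F$-orbits.} Assume $P$ is quasi-rational. Proposition~\ref{firstReturnPeriodic} and its corollary give $F(x+k\vec p)=F(x)+k\vec p$ whenever both sides lie in $H_1$, with $\vec p=2n_1\Lambda\vec d_1$. Hence $F$ is encoded by the pair $(\Phi,\tau)$ on the fundamental domain $\Pi$. By Gutkin's lemma, $\Phi$ is an invertible local translation. Since $F$ is invertible, the following corollary gives $\tau\in\{-1,0,1\}$. Theorem~\ref{thm: bddperiodic} then applies directly: every orbit $\{F^r(x)\}_{r\in\mathbb Z}$ is bounded.

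\textbf{Step 3: conclusion, and the main obstacle.} Combining Steps 1 and 2, the necklace is bounded, and by the correspondence theorem so is the orbit of $T$. The main obstacle is the invertibility of $F$ on $H_1$, which is a hypothesis of Theorem~\ref{thm: bddperiodic}, together with checking that Gutkin's argument survives the extra discrete data $(i,\varepsilon)$.

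For invertibility I would use the inverse refractive necklace map. It is well defined for strongly regular points, and it runs the cones backwards, producing $f_j^{-1}$ with the same structure: reflection about the tail followed by scaling by $\lambda$. For the discrete data, I would regard $\Pi$ as a disjoint union of $2k$ copies indexed by $(i,\varepsilon)$. Gutkin's proof that $\tau$ takes only the values $-1,0,1$ and that the orbits are bounded uses only that $\Phi$ is a measure-preserving piecewise translation of a bounded domain with finitely many pieces. Finiteness of the pieces holds here because the $\Lambda_i$ sequence has period $k$, so the partition of Proposition~\ref{prop: partition} is periodic.
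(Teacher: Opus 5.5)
Your proposal takes the same route as the paper: the correspondence theorem, then Proposition~\ref{firstReturnPeriodic} together with Gutkin's boundedness theorem for the first return map $F$. It is more careful than the paper, which only asserts that bounded $F$-orbits give a bounded necklace and that $F$ is invertible, while you justify both (via the linear-growth estimate across cones and the inverse necklace map).

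One correction to Step 3. The properties of $\Phi$ you list cannot by themselves give boundedness; for instance $\Phi=\mathrm{id}$, $\tau\equiv 1$ satisfies them and has unbounded orbits. What carries over to the $(i,\varepsilon)$-decorated setting is that $F$ is a bijection of a finite union of one-sided strips with $F(x+\vec p)=F(x)+\vec p$. Hence $F(H_1+m\vec p)=H_1+m\vec p$ for all $m\ge 0$, so each bounded slab $(H_1+m\vec p)\setminus(H_1+(m+1)\vec p)$ is $F$-invariant, and every orbit stays in one slab.
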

\begin{proof}
    We know that the orbit is bounded if and only if the necklace is bounded. If the first return map is bounded, the infinite necklace is bounded. 
\end{proof}
Rationality is slightly different:
\begin{theorem}\label{thm: periodic}
    If $P$ is rational and each of $\lambda_1, \dots, \lambda_k$ are rational, then the orbits of the refractive outer billiards map $T$ are periodic. 
\end{theorem}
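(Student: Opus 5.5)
The plan is to reduce periodicity of $T$ to periodicity of the refractive necklace, which the correspondence theorem of this section permits. The argument runs in parallel to Gutkin's rational case, with the finite set of scales $\Lambda_0,\dots,\Lambda_{k-1}$ entering the lattice. Since $P$ is rational, it is quasi-rational, so Theorem~\ref{thm: bdd} already gives that every orbit of $T$, and hence every refractive necklace, is bounded. It then suffices to show that the necklace visits only finitely many configurations together with the index state. By invertibility, this forces periodicity.

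\textbf{Step 1: place the necklace in a lattice.} After an affine change of coordinates, assume the vertices of $P=P_0$ lie in a lattice $L$, say $L=\mathbb{Z}^2$. Each $\lambda_i$ is rational, so every $\Lambda_i$ is rational. Let $D$ be a common denominator of all the numbers $\Lambda_i$ and $\Lambda_i\lambda_{i+1}$ for $0\le i\le k-1$; there are finitely many since the indices are periodic mod $k$.

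I then claim by induction that every vertex of every $P_\ell$ lies in the finer lattice $\frac1{D}L$, after possibly enlarging $D$ once. The point is that $P_\ell$ equals $P_0$ scaled by $\Lambda_\ell$, possibly point-reflected, and translated. The map $\rho_{A,1/\lambda}$ sends a vertex $v$ to $A+(A-v)/\lambda$, and its head $A$ is itself a vertex of $P_\ell$. Writing the vertices of $P_\ell$ as $c_\ell\pm\Lambda_\ell w$, where $w$ ranges over the vertices of $P_0$, the translation part $c_\ell$ is updated by adding rational multiples (with denominators dividing $D$) of lattice vectors. So I will track $c_\ell$ rather than the vertices directly.

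\textbf{Main obstacle.} The update of $c_\ell$ has the form $c_{\ell+1}=(1+1/\lambda)A - c_\ell/\lambda$. Iterating this could make the denominators grow like powers of the numerators of $1/\lambda_i$. The fix is to note that over a full cycle of $k$ steps the linear part is $\pm\prod\lambda_i^{-1}=\pm1$. I would therefore express $c_\ell$ as a fixed affine combination (depending only on $\ell \bmod k$) of the heads $A_1,\dots,A_\ell$. The heads are vertices of earlier polygons, and the coefficients are products $\lambda_{a}\cdots\lambda_b$ over consecutive blocks, which take finitely many values. I expect an induction on $\ell$ in blocks of $k$ to show that all vertices lie in $\frac1{D'}L$ for a fixed $D'$ depending only on $P$ and the $\lambda_i$. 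This is the step I would check most carefully.

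An alternative route to the same conclusion is Theorem~\ref{thm: bddperiodic}. One verifies that the translation vectors $\vec{t}(\varepsilon,i,k)$ defining $F$ are all of the form $\Lambda$-sums of $\vec a_j$, $\vec d_j$, $\vec b_j$ and head corrections $\vec h$. These are differences of vertices of $P$ scaled by rational $\Lambda_i$, so they lie in $\frac1{D}L$ and generate a discrete group. I would present this second route as the main argument, with the lattice count as justification.

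\textbf{Step 2: conclude.} The necklace is bounded, so the polygons $P_\ell$ lie in a bounded region. Their vertices lie in a fixed discrete lattice, and they come in at most $2k$ shapes. Hence the pairs $(P_\ell,\ell \bmod k)$ take finitely many values, so there are $\ell<\ell'$ with $\ell\equiv\ell' \pmod k$ and $P_\ell=P_{\ell'}$. The refractive necklace map is invertible and determined by the current polygon and index, so the necklace is periodic. By the correspondence theorem, the orbit of $T$ is periodic in the sense of the Remark preceding it.
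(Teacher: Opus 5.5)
Your proposal is correct and is essentially the paper's argument: boundedness via Theorem~\ref{thm: bdd}, confinement to a fixed lattice after clearing denominators, finiteness of a bounded lattice set, and invertibility. The only difference is that you run it on the pairs $(P_\ell,\ell \bmod k)$ of the whole necklace rather than on first-return heads, which is if anything cleaner.

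The step you flag as the main obstacle is not one. Write the head as $A_{\ell+1}=c_\ell+(-1)^\ell\Lambda_\ell w_{\ell+1}$, with $w_{\ell+1}$ a vertex of $P_0$. Then your update collapses to $c_{\ell+1}=c_\ell+(-1)^\ell(\Lambda_\ell+\Lambda_{\ell+1})\,w_{\ell+1}$, so a single common denominator of $\Lambda_0,\dots,\Lambda_{k-1}$ works for every $\ell$ (and symmetrically for $\ell<0$). Neither the block induction nor the detour through Theorem~\ref{thm: bddperiodic} is needed.
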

\begin{proof}
    First, note that since $P$ is rational, it is also quasi-rational. By Theorem~\ref{thm: bdd}, we know that the orbit is bounded. 
    If $P$ is rational and each of $\lambda_1, \dots, \lambda_k$ are rational, then after clearing a common denominator, all heads along one orbit lie in a single fixed lattice. Since a bounded subset of a lattice is finite, the sequence of first-return heads is eventually periodic. The invertibility of $F$ finishes the proof. 
\end{proof}

\section{The Existence of Periodic Orbits}\label{sec:culter}
Now, we generalize Culter's theorem: every refractive outer billiard has a periodic orbit. It is worth noting that the proof for this in~\cite{Culter} avoids the usage of the necklace map technique. We present a slightly modified approach here involving the use of necklaces. 

The main idea of the proof is as follows. Instead of directly searching for a periodic orbit of $T,$ we will look for a periodic orbit of $T^{2k}.$ After the $2k$-th power of the refractive necklace map, the polygon returns to its original shape. In other words, $T^{2k}(P)$ is a translation of $P$. 

We will look for a necklace that satisfies the following conditions:
\begin{enumerate}
    \item Each ray contains exactly one polygon from the refractive necklace, which are all translations of each other (i.e., they can be obtained from each other by applying $T^{2k}$ multiple times). Denote the polygon on ray $R_j$ as $Q_j$.
    \item 
    If the first step is met, then the following identity holds:
    \[
    Q_{j+1} = Q_j + 2\Lambda\overrightarrow{ a_{j}} \cdot p_j,
    \] where $p_j$ is some positive integer satisfying $p_{n+i} = p_i.$
    \item The \textit{head} of $Q_j$ lies in the parallelogram $\pi_{0, j, 2kp_j}^+.$
\end{enumerate}
The last two conditions ensure that $Q_{j+1} = T^{2k\cdot p_j}(Q_j).$ If these conditions are met, we can simply fill in the remaining refractive necklace to obtain a periodic orbit. 

First, we introduce a lemma that helps us simplify the head condition. 

\begin{lemma}
\label{lem: interior}
    Consider a convex $n$-gon $P$ and its $2n$ cones. Suppose $P$ lies in a cone bordered by rays $R$ and $R'$. Construct the parallelogram $\pi$ that has sides parallel to $R$ and $R'$ and $\overrightarrow{A_-A_+}$ as a diagonal. Then $P$ is completely contained in $\pi.$
\end{lemma}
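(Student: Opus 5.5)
We need to prove Lemma~\ref{lem: interior}. The setting: $P$ lies inside the cone bounded by consecutive rays $R$ and $R'$ from $o$, with head $A_+$ and tail $A_-$. Since no side of $P$ is parallel to a direction strictly between $R$ and $R'$, we show $P$ lies between two pairs of parallel lines.

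The plan is to describe $\pi$ as the intersection of two strips. One strip is bounded by the two lines through $A_-$ and $A_+$ parallel to $R$. The other is bounded by the two lines through $A_-$ and $A_+$ parallel to $R'$. It then suffices to show that $P$ lies in each strip. Equivalently, we show that among all points of $P$, the linear functional vanishing on the direction of $R$ (resp.\ $R'$) attains its maximum and minimum at $A_+$ and $A_-$.

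The key step is an angular argument about support lines. Parametrize directions through $o$. The supporting rays from $o$ to $P$ are $L_+(o)$ through $A_+$ and $L_-(o)$ through $A_-$. Since $P$ lies in the cone and touches neither neighbouring cone, both supporting directions lie in the closed angular sector between $R$ and $R'$.

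Now take the functional $\phi$ whose level sets are parallel to $R$. A support line of $P$ parallel to $R$ touches $P$ at some vertex $V$. The edges of $P$ at $V$ then have directions on either side of the direction of $R$. As we rotate a support line of $P$ around the boundary, the tangency vertex changes only when the line becomes parallel to a side of $P$. By the cone construction, every side direction is one of the lines $\ell_1,\dots,\ell_n$, and none of these lies strictly inside the sector between $R$ and $R'$.

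Hence as the direction rotates from that of $R$ through the sector to the supporting direction $L_\pm(o)$, the contact vertex does not change. So the vertex of $P$ extremal for $\phi$ on the appropriate side is exactly the tangency point of the supporting line from $o$, namely $A_+$ or $A_-$. Applying the same argument to the opposite support line parallel to $R$, and rotating through the antipodal sector, shows that the other extreme of $\phi$ is attained at the other tangency point. The analogous argument for $R'$ gives the second strip, so $P\subseteq\pi$.

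The main obstacle is matching orientations. We must check which of $A_\pm$ is the maximum and which the minimum for each functional, so that $\overrightarrow{A_-A_+}$ really is a diagonal of the intersection of the strips rather than degenerate. We must also handle the fact that the supporting line from $o$ is not parallel to $R$ but only lies in the sector.

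I would try to settle this by translating $o$ along $L_+(o)$. Moving $o$ far along the support line, in the direction away from $P$, keeps $A_+$ as head while the ray direction to $P$ tends to that line's direction. A cleaner route would avoid this: use the normal fan of $P$, where each vertex corresponds to an arc of outward normal directions bounded by the normals of the adjacent sides. The sector between $R$ and $R'$ then corresponds to an arc of normals containing no side normal, so it lies within the normal cone of a single vertex on each side, which must be $A_+$ on one side and $A_-$ on the other. The case where $P$ touches $R$ or $R'$ itself is handled by closure, since strongly regular configurations avoid degenerate contact.
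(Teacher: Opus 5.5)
Your argument is correct, but it takes a different route from the paper's.

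\textbf{The paper's route.} It argues locally. Suppose some vertex lay on the $R'$-side of the line $\ell$ through $A_+$ parallel to $R'$. Convexity then gives a vertex $v'$ adjacent to $A_+$ inside the triangle $oA_+B$. The side $v'A_+$ would then point strictly between $\overrightarrow{oA_+}$ and $R'$, i.e.\ strictly inside the cone, which is impossible. The paper asserts that the same works at $A_-$.

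\textbf{Your route.} You argue globally via the normal fan. No side direction, with either orientation, lies in the open sector. So each of the two antipodal arcs of normals, obtained by rotating the sector by $\pm 90^\circ$, lies in the normal cone of a single vertex, and these two vertices are $A_+$ and $A_-$.

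\textbf{Comparison.} Both proofs rest on the same fact: no side direction lies strictly inside the cone. Yours yields the maximum and minimum of both functionals at once, hence all four sides of $\pi$. The paper's written proof only treats the two sides of $\pi$ facing $o$: the one through $A_+$ parallel to $R'$ and the one through $A_-$ parallel to $R$. The sides through $A_+$ parallel to $R$ and through $A_-$ parallel to $R'$ need a further instance of its local argument (an edge at $A_+$ pointing between $R$ and $\overrightarrow{oA_+}$). In return, the paper's version uses nothing beyond adjacency of vertices.

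\textbf{The orientation step you flag.} It is smaller than you fear. It suffices that $A_+\neq A_-$. This holds because the two supporting rays from $o$ have different directions ($P$ has interior and $o\notin P$), so they cannot pass through a common vertex. Then, whichever of $A_\pm$ is the maximum of each functional, the intersection of the two strips has $A_+$ and $A_-$ as opposite corners. It is non-degenerate because $P$ has interior. So the detour of sliding $o$ along $L_+(o)$ can be dropped.

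One small imprecision: since $R$ is itself a side direction, one support line parallel to $R$ contains a whole side. Your constant contact vertex is then an endpoint of that side, which is fine by closedness of normal cones.
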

\begin{figure}[!htb]
    \centering
\usetikzlibrary{calc}

\begin{tikzpicture}[scale=0.8]
  \coordinate (O) at (0,0);
  \coordinate (Rend)  at (30:7);
  \coordinate (Rprime) at (90:7);
  \draw[thick] (O) -- (Rend);
  \draw[thick] (O) -- (Rprime);
  \node[above right] at (Rend) {$R$};
  \node[above] at (Rprime) {$R'$};
  
  \coordinate (P1) at (40:5);
  \coordinate (P2) at (50:5.95);
  \coordinate (P3) at (65:6);
  \coordinate (P4) at (80:4.5);
  \coordinate (P5) at (55:3);

  \fill[gray!20](P1) -- (P2) -- (P3) -- (P4) -- (P5) -- cycle;
  \draw[thick, pattern=north east lines, pattern color=gray]
    (P1) -- (P2) -- (P3) -- (P4) -- (P5) -- cycle;
  \coordinate (Aminus) at (40:7);  
  \coordinate (Aplus)  at (80:7);  

  \draw[thick] (0,0) -- (Aminus);
  \draw[thick] (0,0) -- (Aplus);
  \fill (P1) circle (1.5pt) node[right] {$A_-$};
  \fill (P4)  circle (1.5pt) node[left] {$A_+$};
  \fill (0,0) circle (1.5pt) node[below left] {$o$};
  \path let \p1 = (P4) in
    coordinate (B) at (\x1, {\x1*tan(30)});
  \coordinate (P4up) at ($(P4)+(0,3)$);
  \draw[thick] (B) -- (P4up);
  \node[right] at (P4up) {$\ell$};
  \fill[gray!20] (0,0) -- (P4) -- (B) -- cycle;
  \fill[thick, pattern=north east lines, pattern color=gray] (0,0) -- (P4) -- (B) -- cycle;
  \draw[thick] (0,0) -- (P4) -- (B) -- cycle; 

  \fill (B) circle (1.5pt) node[below] {$B$};

\end{tikzpicture}
    \caption{A diagram for the proof of Lemma~\ref{lem: interior}.}
    \label{fig: interior}
\end{figure}
\begin{proof}
    See Figure~\ref{fig: interior}.
    Draw $\ell,$ the line that passes through $A_+$ and is parallel to $R'.$ We claim that every vertex of $P$ lies on the opposite side of $\ell$ as $R'.$ 

    Let $B$ be the intersection of $\ell$ and $R.$ Since $A_+$ is the head, no vertex of $P$ can lie in the truncated strip formed by $R', \overline{OA_+},$ and $\ell.$ Suppose for the sake of contradiction that some vertex $v$ lies in triangle $OA_+B.$ 
    
    Since $P$ is convex, there exists a vertex adjacent to $A_+$, which we denote $v',$ that lies in $\triangle OA_+B.$ In this case, the ray from $o$ in the direction of $v'A_+$ lies strictly between $R$ and $R',$ so that $C$ is not a cone: the ray from $o$ in the direction of $v'A_+$ lies between $R'$ and $oA_+,$ which gives a contradiction. Repeating a similar argument with $A_-$ gives that every vertex of $P$ lies inside $\pi,$ and by convexity, $P$ is contained in $\pi.$
\end{proof}

Now, pick an arbitrary point $x$ in the interior of $P.$ Using the lemma above, we have the following fact.

\begin{figure}[!htb]
    \centering
    \usetikzlibrary{calc}
    \begin{tikzpicture}[scale = 0.6]
        \coordinate (O) at (0,0);
        \coordinate (A) at (6,0);
        \coordinate (B) at (8,4);
        \coordinate (C) at (2,4);

        \draw[thick]
        (O) -- (A) -- (B) -- (C) -- cycle;

        \coordinate (P1) at (3,0);
        \coordinate (P2) at (1.25, 2.5);
        \coordinate (P) at (6.2,3);

        \draw[thick]
        (C) -- (P2) -- (P1) -- (A) -- (P) -- cycle;        \fill[gray!20] (C) -- (P2) -- (P1) -- (A) -- (P) -- cycle;
        \draw[pattern=north east lines, pattern color=gray]
        (C) -- (P2) -- (P1) -- (A) -- (P) -- cycle;   
        \coordinate (x) at (3.2,1.5);
        \coordinate (xo) at ($(O) + (x) - (C)$);
        \coordinate (xa) at ($(A)+ (x) - (C)$);
        \coordinate (xb) at ($(B) + (x) - (C)$);
        \coordinate (xc) at ($(C) + (x) - (C)$);

        \draw[thick]
        (xo) -- (xa) -- (xb) -- (xc) -- cycle;
        \begin{scope}[thick,decoration={
            markings,
            mark=at position 0.5 with {\arrow{>}}}
            ] 
            \draw[thick, dotted, postaction={decorate}](xo) -- (O);
        \draw[thick, dotted, postaction={decorate}](xa) -- (A); 
        \draw[thick, dotted, postaction={decorate}](xb) -- (B); 
        \draw[thick, dotted, postaction={decorate}](xc) -- (C); 
        \end{scope}
    \fill (x) circle (0.05) node[below right] {$x$};
    \fill (C) circle (0.05) node[left] {$A_+$};
    \fill (A) circle (0.05) node[below] {$A_-$};

    \draw[thick] (A) circle (1);
    \end{tikzpicture}
    \caption{An $\varepsilon$-ball for $x$ centered at $A_-$. Each dotted vector is $\vec{v}.$}
    \label{fig: ball}
\end{figure}

\begin{proposition}
    For each ray $R_j,$ there exists $\varepsilon_j>0$ such that if $P$ is translated so that $x$ is in one of the $\varepsilon_j$-balls centered at $2\Lambda p_j \vec{d}_j,$ where $p_j \in \mathbb{Z}^+$, then the head of the translated polygon lies inside the region $\pi_{0, j, 2kp_j}^+$. 
\end{proposition}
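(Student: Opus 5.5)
We want to show that if $x$ is placed near the point $2\Lambda p_j\vec d_j$ on $R_j$, then the head of the translated $P$ lies in $\pi^+_{0,j,2kp_j}$. The plan is to locate both the head and that parallelogram explicitly, and then use openness.

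\emph{Step 1: where the head sits relative to $x$.} Let $\vec v$ be the vector from $x$ to the head $A_+$ of $P$ in cone $C_j$. By Lemma~\ref{lem: interior}, $P$ lies in the parallelogram with diagonal $\overrightarrow{A_-A_+}$ and sides parallel to $R_j$ and $R_{j+1}$. The point $x$ is interior to $P$, so $x$ is strictly inside this parallelogram. In the basis $(\vec d_j,\vec b_j)$ we therefore have
\[
\vec v = s\,\vec d_j + t\,\vec b_j ,
\]
where $s,t$ are fixed and $s$ lies strictly between $0$ and the $\vec d_j$-extent $c_j$ of the head--tail vector $\vec a_j=\vec b_j-\vec d_j$ for $P_0$ (scale $\Lambda_0=1$). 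This holds as long as the polygon stays in the same cone configuration. Lemma (Gutkin, independence of the head--tail vector within a cone) guarantees that the relative position of head, tail and $x$ is translation invariant while the polygon stays in $\mathscr P_j$.

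\emph{Step 2: where the parallelogram sits.} By Proposition~\ref{prop: partition}, the parallelograms partition the strip into slabs cut by lines parallel to $R_{j+1}$. These lines pass through the points of $R_j$ at cumulative distances $(\Lambda_1+\dots+\Lambda_m)\vec d_j$ from $o$. The partial sums over one period of length $k$ equal $\Lambda$ (using $\Lambda_k=\Lambda_0=1$). Hence the $2kp_j$-th slab is bounded by the lines through
\[
\bigl(2p_j\Lambda-\Lambda_{2kp_j}\bigr)\vec d_j=(2p_j\Lambda-1)\vec d_j
\quad\text{and}\quad
2p_j\Lambda\,\vec d_j .
\]
It is thus a slab of $\vec d_j$-width $1$ ending exactly at $2\Lambda p_j\vec d_j$.

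\emph{Step 3: the intended configuration and the main obstacle.} When $x=2\Lambda p_j\vec d_j$ exactly, the $\vec d_j$-coordinate of the head relative to the slab's right boundary is $s$ (modulo the sign conventions for head versus tail). This step is the real obstacle: one has to check the convention in Proposition~\ref{prop: partition} so that the head coordinate lands in the open interval $(2p_j\Lambda-1,\,2p_j\Lambda)$. The key claim is that $0<|s|<1$ with the correct sign. This should follow from Step~1, since the $\vec d_j$-width of the bounding parallelogram of $P_0$ is exactly the $\vec d_j$-component of $\vec a_j$, namely $1$ in the normalization $\vec b_j=\vec d_j+\vec a_j$ at scale $\Lambda_0$, and $x$ is interior to it. One must also check that the head lies strictly inside the truncated strip $H^+_{0,j}$. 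That is, it must lie above $R_j$ and below $R_j+\vec b_j$, and away from the polygonal line. This holds for large $p_j$ automatically, and for every $p_j\ge1$ once the translated polygon intersects $R_j$ but not $R_{j+1}$, which again follows from interiority of $x$ via Lemma~\ref{lem: interior}.

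\emph{Step 4: conclusion by openness.} Both membership conditions are strict inequalities in the coordinates $(s,t)$, which are continuous in the translation. So there is $\varepsilon_j>0$, depending only on $\vec v$ and the distance from $x$ to $\partial P$, such that every translation placing $x$ within $\varepsilon_j$ of $2\Lambda p_j\vec d_j$ keeps the head in $\pi^+_{0,j,2kp_j}$. Finally, everything is equivariant under translation by multiples of $2\Lambda\vec d_j$, by the Corollary to Lemma~\ref{lem: nextCone}. Hence $\varepsilon_j$ can be chosen uniformly in $p_j\in\mathbb Z^+$, and the orientation is $+$ because $P$ itself is untouched apart from the translation.
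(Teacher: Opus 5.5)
Your argument is essentially the paper's. The head of the translate is $x+\vec v$. The $2kp_j$-th parallelogram has gap $\Lambda_{2kp_j}=1$, so it is a translate of the bounding parallelogram of $P$ from Lemma~\ref{lem: interior}, with the corner corresponding to $A_-$ placed at $2\Lambda p_j\vec d_j$. Openness then gives $\varepsilon_j$, and $\varepsilon_j$ is uniform in $p_j$ because these parallelograms differ by multiples of $2\Lambda\vec d_j$. The paper phrases the same observation as the translate $\pi-\vec v$ containing the bottom-right corner.

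The sign you leave open in Step 3 is settled by that same lemma. On the bounding parallelogram, $A_+$ has the smallest $\vec d_j$-coordinate and the largest $\vec b_j$-coordinate, so $\vec v=s\vec d_j+t\vec b_j$ with $s\in(-1,0)$ and $t\in(0,1)$; your ``$1$'' for the $\vec d_j$-component of $\vec a_j$ should be $-1$. This puts $2\Lambda p_j\vec d_j+\vec v$ in the open slab on the origin side of the corner, strictly between $R_j$ and $R_j+\vec b_j$.
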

\begin{proof}
    First note that the ``bottom right'' corner of $\pi_{0, j, 2kp_j}^+$ is placed at $2\Lambda \vec{r}_jp_j$ along $R_j$ from $o$. Therefore, we can prove this for general translations of $\pi$. Let $\vec{v}$ be the vector difference of $x$ and $A_+$. Since $\vec{v}$ is completely contained in the parallelogram, the points $q$ for which $q+ \vec{v}$ is inside the parallelogram is a translation of $\pi$, which contains the bottom right corner. It follows that such an $\varepsilon_j$ exists. See Figure~\ref{fig: ball}.
\end{proof}

\begin{figure}[!htb]
    \centering
    \begin{tikzpicture}[scale=1.2]
  \coordinate (O) at (0,0);
  \draw[thick,->] (O) -- (7,0);
  \node[below left] at (O) {$o$};
  \node[above] at (7,0) {$R_j$};
  \coordinate (P1) at (3,0);
  \coordinate (P2) at (6,0);
  \draw (P1) circle [radius=0.1];
  \draw (P2) circle [radius=0.1];
  \pgfmathsetmacro{\pw}{0.8}  
  \pgfmathsetmacro{\ph}{0.4}  
  \pgfmathsetmacro{\sx}{0.3}  
  \coordinate (P1BR) at (P1);                         
  \coordinate (P1BL) at ($(P1BR)+(-\pw,0)$);          
  \coordinate (P1TL) at ($(P1BL)+(\sx,\ph)$);         
  \coordinate (P1TR) at ($(P1BR)+(\sx,\ph)$);         

  \draw[fill=gray!20]
    (P1BR) -- (P1BL) -- (P1TL) -- (P1TR) -- cycle;
  \coordinate (P2BR) at (P2);                         
  \coordinate (P2BL) at ($(P2BR)+(-\pw,0)$);          
  \coordinate (P2TL) at ($(P2BL)+(\sx,\ph)$);         
  \coordinate (P2TR) at ($(P2BR)+(\sx,\ph)$);         

  \draw[fill=gray!20]
    (P2BR) -- (P2BL) -- (P2TL) -- (P2TR) -- cycle;
  \node[below] at ($(O)!0.5!(P1)$) {$2\Lambda r_j$};
  \node[below] at ($(P1)!0.5!(P2)$) {$2\Lambda r_j$};
  \fill (P1) circle (2.5pt) node[below] {$\varepsilon_j$};
  \fill (P2) circle (2.5pt) node[below] {$\varepsilon_j$};
\end{tikzpicture}

    \caption{The $\varepsilon_j$-balls along $R_j$. The first parallelogram is $\pi_{1, j, 2k}^+,$} and the second is $\pi_{1, j, 4k}^+.$
    \label{fig:placeholder}
\end{figure}

Taking $\min(\varepsilon_1, \varepsilon_2, \dots, \varepsilon_{2n}) = \varepsilon,$ we know that if on the ray $R_j$ the point $x$ lies in a $\varepsilon$-ball centered at distance $2\Lambda r_j p_j$ for any $p_j\in \mathbb{Z}^+,$ the refractive necklace map will send the polygon to the next cone in $2kp_j$ iterations.

At this point, it suffices to find a polygon with vertices in these $\varepsilon$-balls such that the vector difference between points on adjacent rays $R_j$ and $R_{j+1}$ is an integer multiple of $2\Lambda\vec{a}_j.$ Then, we can place $x$ at each vertex of this new polygon to obtain a periodic orbit.

\begin{lemma}[Tabachnikov \protect{\cite[pp. 4--5]{Culter}}]
    For any $\delta>0,$ there exists arbitrarily large $q\in \mathbb{R}$ and positive integers $p_1, p_2, \dots, p_{2n}$ such that for each $j,$
\[
\lvert  qr_j -p_j \rvert < \delta.
\] 
\end{lemma}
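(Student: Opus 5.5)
This is a simultaneous Diophantine approximation statement, and I would prove it with Dirichlet's theorem via the pigeonhole principle applied to the fractional parts of the multiples $t(r_1,\dots,r_{2n})$. Since $r_{n+i}=r_i$, only $n$ distinct numbers $r_1,\dots,r_n$ need to be approximated. Once $p_1,\dots,p_n$ are found, I set $p_{n+i}:=p_i$, which is consistent with the symmetry required in condition 2 of the construction.

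Fix $\delta>0$ and an integer $M\ge 1$ with $1/M<\delta$.

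\begin{enumerate}
\item Consider the $M^n+1$ points $v_t=(\{tr_1\},\dots,\{tr_n\})\in[0,1)^n$ for $t=0,1,\dots,M^n$. Here $\{\cdot\}$ is the fractional part.
\item Partition $[0,1)^n$ into $M^n$ subcubes of side $1/M$. By pigeonhole, two indices $t_1<t_2$ give points in the same subcube.
\item Set $q_0=t_2-t_1$, a positive integer. Then for each $i$ there is an integer $p_i$ with $|q_0r_i-p_i|<1/M<\delta$.
\end{enumerate}

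This gives a single good $q$. To get arbitrarily large $q$, I see two routes.

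\begin{itemize}
\item \emph{First route.} If some $r_i$ is irrational, then letting $M\to\infty$ forces $q_0\to\infty$. Indeed, any bounded $q_0$ would make $\min_{q\le Q}\operatorname{dist}(qr_i,\mathbb{Z})$ zero, which is impossible.
\item \emph{Second route, which I would use since it handles all cases uniformly.} Run the pigeonhole argument on the points $v_t$ with $t$ ranging over multiples of a large integer $L$. That is, use the points $v_{sL}$ for $s=0,\dots,M^n$. The difference is then $q=(s_2-s_1)L\ge L$, and the same bound $|qr_i-p_i|<\delta$ holds. Since $L$ is arbitrary, $q$ can be taken arbitrarily large. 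This also covers the quasi-rational case, where exact equality $qr_i=p_i$ is attainable.
\end{itemize}

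Finally, I need the $p_i$ to be positive. Each $r_i>0$, and once $q>1/\min_i r_i$ we have $qr_i>1$. Combined with $|qr_i-p_i|<\delta<1$ (shrinking $\delta$ if necessary), this forces $p_i\ge 1$.

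The only mildly delicate point is the combination of ``arbitrarily large'' with positivity of the $p_i$, and the multiples-of-$L$ trick handles both at once. I therefore do not expect a real obstacle; the remainder is routine bookkeeping.
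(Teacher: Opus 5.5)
Your proof is correct. The pigeonhole argument on the points $(\{sLr_1\},\dots,\{sLr_n\})$ is the standard Dirichlet simultaneous-approximation argument, equivalently recurrence of the linear flow on $\mathbb{R}^n/\mathbb{Z}^n$. Restricting to multiples of $L$ does make $q$ arbitrarily large, and taking $q>1/\min_i r_i$ with $\delta<1$ does force $p_i\ge 1$. Setting $p_{n+i}:=p_i$ also matches the symmetry the paper uses later.

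The paper gives no proof of this lemma and simply cites Tabachnikov for it as a standard fact. Your argument supplies that justification in self-contained form, with no gap.
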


\begin{remark}
    In other words, we can approximate $(r_1: r_2: \dots : r_{n}) \in \mathbb{RP}^{n-1}$ arbitrarily well with $(p_1 : \dots : p_n) \in \mathbb{ZP}^{n-1}.$ Note that since $r_{n+j} = r_j,$ we also have $p_{n+j} = p_j.$
\end{remark}

\begin{theorem}
For any choice of $P$ and $\lambda_1, \dots, \lambda_k$, there exists a periodic orbit. Moreover, the set of points that form a periodic orbit has positive measure in the plane.
\end{theorem}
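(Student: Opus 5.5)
The plan is to carry out the three-condition programme set up at the start of this section. We realize a necklace consisting of translates $Q_1,\dots,Q_{2n}$ of $P$ (all in $\mathcal{S}_0^+$), one per ray. Consecutive translates differ by $2\Lambda p_j \vec{a}_j$, and each head lies in $\pi^+_{0,j,2kp_j}$. Then $Q_{j+1}=T^{2kp_j}(Q_j)$ by Lemma~\ref{lem: nextCone}, and after one full loop we return to $Q_1$. The correspondence theorem between the refractive necklace and the orbit then yields a periodic orbit of $T$.

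To build the loop, we first fix $\varepsilon=\min_j\varepsilon_j$ from the preceding proposition. We then apply Tabachnikov's lemma with a small $\delta$, chosen in terms of $\varepsilon$, $\Lambda$ and $\max_j|\vec{a}_j|$. This gives a large $q$ and positive integers $p_j$ with $p_{n+j}=p_j$ and $|qr_j-p_j|<\delta$.

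Now consider the necklace polygon scaled by $2\Lambda q$. Its vertices $qA_j$ lie on $R_j$ at the points $2\Lambda q r_j\vec{d}_j$, modulo the normalization $oA_{j+1}=r_j\vec{b}_j=r_{j+1}\vec{d}_{j+1}$. Its edges are $2\Lambda q r_j\vec{a}_j$. Replace each edge by $2\Lambda p_j\vec{a}_j$. The resulting polygonal line still closes, because
\[\sum_j p_j\vec{a}_j=\sum_{j\le n}p_j(\vec{a}_j+\vec{a}_{j+n})=0,\]
using $\vec{a}_{j+n}=-\vec{a}_j$ and $p_{j+n}=p_j$. Its vertices are within $O(\delta)$ of the exact vertices $2\Lambda q r_j\vec{d}_j$. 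After a global translation, we want each vertex within $\varepsilon$ of a point $2\Lambda p'_j\vec{d}_j$ for some positive integer $p'_j$. This is the index appearing in the head condition.

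The main obstacle is reconciling these two integer labels: the $p_j$ governing the edge lengths and the $p'_j$ governing which parallelogram contains the head. These must agree with the number of necklace steps, namely $2kp_j$ steps in cone $C_j$. We handle this using the triangle relation $\vec{b}_j=\vec{d}_j+\vec{a}_j$. Because of this relation, the polygon obtained from the closed loop with edges $2\Lambda p_j\vec{a}_j$ has its vertex on $R_{j+1}$ at an integer multiple of $2\Lambda\vec{d}_{j+1}$, up to error $O(\delta)$. We also use the Corollary to Lemma~\ref{lem: nextCone}, $f_j(A+2\Lambda\vec{d}_j)=f_j(A)+2\Lambda\vec{b}_j$: shifting a head by one period $2\Lambda\vec{d}_j$ adds exactly $2k$ steps, consistent with the $2kp_j$ count.

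So we will check the following inductively around the loop. If $x$ sits in the $\varepsilon$-ball at $2\Lambda p_j\vec{d}_j$ on $R_j$, then the next-cone image has $x$ in the $\varepsilon$-ball at the corresponding integer point on $R_{j+1}$. The error does not accumulate, since each step is an exact translation by $2\Lambda p_j\vec{a}_j$. Only the initial placement error, which is less than $\varepsilon$ once $\delta$ is small, propagates, and it is the same translation throughout.

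Finally, for positive measure: every condition above is open in the position of $x$. Any point $x'$ in a small ball around the chosen starting position gives polygons that are translates of the constructed ones by a common small vector, so the same heads stay in the same regions $\pi^+_{0,j,2kp_j}$. Hence the whole necklace is periodic for a disk's worth of starting configurations. Via the correspondence theorem, the similarity $\varphi_\ell$ maps this to an open set of points $o$ with periodic orbits, which therefore has positive measure.
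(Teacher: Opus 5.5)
Your proposal follows the paper's proof: the same three conditions, the same $\varepsilon_j$-ball proposition, Tabachnikov's approximation lemma, the closed loop with edges $2\Lambda p_j\vec a_j$, and the same small-common-translation argument for positive measure. The only difference is bookkeeping. You compare the loop with the exact necklace polygon scaled by $2\Lambda q$, whose vertices are $2\Lambda q r_j\vec d_j$. The paper instead tracks $X_j = V_j - 2\Lambda p_j\vec d_j$ directly through $\vec b_j$ and the ratios $r_j/r_{j+1}$. Your comparison also shows that the ``main obstacle'' is not one. Since $\|2\Lambda q r_j\vec d_j - 2\Lambda p_j\vec d_j\| < 2\Lambda\delta\|\vec d_j\|$, you get $p'_j = p_j$ automatically, and the corollary $f_j(A+2\Lambda\vec d_j)=f_j(A)+2\Lambda\vec b_j$ is never needed.

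One claim, however, is false, and the inductive check you describe rests on it: the error \emph{does} accumulate. At each vertex the deviation from the target changes by
\[
X_{j+1}-X_j = 2\Lambda\bigl(p_j - \tfrac{r_j}{r_{j+1}}p_{j+1}\bigr)\vec b_j ,
\]
which is $O(\delta)$ but generally nonzero. So the step ``$x$ in the $\varepsilon$-ball on $R_j$ implies $x$ in the $\varepsilon$-ball on $R_{j+1}$'' fails for $x$ near the rim of a ball. What rescues the proof is the global estimate already implicit in your third paragraph. If $W_j$ is the $j$-th vertex of your loop started at $2\Lambda q r_1\vec d_1$, then
\[
\|W_j - 2\Lambda p_j\vec d_j\| \le 2\Lambda\delta\Bigl((j-1)\max_i\|\vec a_i\| + \|\vec d_j\|\Bigr).
\]
Consequently $\delta$ must be chosen with the factor $2n$ (and $\max_j\|\vec d_j\|$) built in, not just in terms of $\varepsilon$, $\Lambda$ and $\max_j\|\vec a_j\|$. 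This is exactly what the paper's per-edge budget of $\varepsilon/2n$ does. With that choice the inequalities are strict, which leaves room for the small common translation. The rest of your argument, including positive measure, then goes through.
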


\begin{proof}
Let $\varepsilon := \min(\varepsilon_1, \dots, \varepsilon_{2n})$, with $\varepsilon_j$ as in Proposition 17. We seek a more specific polygon with side length vectors \[2\Lambda \vec{a}_1p_1, \dots, 2\Lambda\vec{a}_np_n, -2\Lambda \vec{a}_1p_1, \dots, -2\Lambda\vec{a}_np_n,\] whose $j$-th vertex lies within $\varepsilon$ of the point $2\Lambda p_j\vec{d}_j$ on $R_j$. Note that the sum of these is the zero vector, so that we actually return to the original point. 

For $p_1, \dots, p_{2n}$ with $p_{n+j} = p_j$, define the vertices
\[
V_1 := 2\Lambda p_1\vec{d}_1, \qquad V_{j+1} := V_j + 2\Lambda p_j\vec{a}_j,
\]
and the deviations $X_j := V_j - 2\Lambda p_j\vec{d}_j$, so that $X_1 = 0$. Since $\vec{a}_{n+j} = -\vec{a}_j$ and $p_{n+j} = p_j$, the path closes. From $\vec{b}_j = \vec{d}_j + \vec{a}_j$ and $r_j\vec{b}_j = r_{j+1}\vec{d}_{j+1}$ we get $\vec{d}_{j+1} = \frac{r_j}{r_{j+1}}\vec{b}_j$, and therefore
\[
X_{j+1} = X_j + 2\Lambda\bigl(p_j\vec{d}_j + p_j\vec{a}_j - p_{j+1}\vec{d}_{j+1}\bigr)
        = X_j + 2\Lambda\Bigl(p_j - \tfrac{r_j}{r_{j+1}}\,p_{j+1}\Bigr)\vec{b}_j.
\]
Choose $\delta > 0$ small enough that
\[
2\Lambda\,\delta\,\Bigl(1 + \max_j \tfrac{r_j}{r_{j+1}}\Bigr)\,\max_j\|\vec{b}_j\| \;<\; \frac{\varepsilon}{2n},
\]
and, by Lemma 18, choose $q$ and $p_1, \dots, p_{2n} \in \mathbb{Z}^+$ with $|qr_j - p_j| < \delta$ for all $j$. Then,
\[
\Bigl|p_j - \tfrac{r_j}{r_{j+1}}\,p_{j+1}\Bigr|
\;\le\; |p_j - qr_j| + \tfrac{r_j}{r_{j+1}}\,|qr_{j+1} - p_{j+1}|
\;<\; \delta\Bigl(1 + \tfrac{r_j}{r_{j+1}}\Bigr),
\]
so $\|X_{j+1}\| < \|X_j\| + \varepsilon/2n$, and by induction
\[
\|X_j\| \;\le\; \frac{(j-1)\,\varepsilon}{2n} \;<\; \varepsilon
\qquad (1 \le j \le 2n).
\]

Now, let $Q_j$ be the translate of $P$ placing the point $x \in \operatorname{int}(P)$ fixed above at $V_j$. Since $x$ lies within $\varepsilon \le \varepsilon_j$ of $2\Lambda p_j\vec{d}_j$, Proposition 17 places the head of $Q_j$ in $\pi^+_{1,j,2kp_j}$, and $Q_{j+1} = Q_j + 2\Lambda p_j\vec{a}_j$ by construction, so Conditions 1--3 hold. \vspace{0.05in}

Finally, note $$\max_j\|X_j\| \le (2n-1)\varepsilon/2n,$$ so translating the entire configuration by any vector $\vec{u}$ with $\|\vec{u}\| < \varepsilon/2n$ keeps every vertex inside its $\varepsilon_j$-ball and yields another periodic necklace. The periodic points of $T$ therefore contain an open disk, and hence have positive measure in the plane.
\end{proof}

\section{Acknowledgements}

I express my sincere gratitude to my mentor, Prof. Maxim Arnold, for his invaluable patience, dedication, and feedback throughout this project. I am also grateful to him for providing the topic of this paper. I also thank the MIT PRIMES-USA program for making this research possible.

\appendix
\section{Motivating Refractive Outer Billiards}\label{sec:motivation}

The outer billiards system gets its other name, \textit{dual billiards}, from the \textit{projective duality} that it shares with classical billiards. To illustrate this duality, define the outer billiards system on $S^2,$ the 2-sphere.

There exists a natural correspondence between the set of oriented great circles and the set of points on the sphere given by a ``projective'' duality: every oriented great circle corresponds to its \textit{north pole}. Given an oriented curve $\gamma \subset S^2,$ we define its \textit{dual curve} $\gamma^*$ as follows. 

The curve $\gamma$ defines a one-parameter family of oriented tangent lines. Extend each such tangent line to obtain a corresponding family of oriented great circles. Then, the dual curve $\gamma^*$ is the collection of north poles of these great circles. Note that for great circles $a, b$ and corresponding north poles $A, B,$ the spherical distance $AB$ equals the angle between the lines $a$ and $b$. See Figure \ref{fig:greatcircle}.

\begin{figure}[!hbt]
    \centering
\begin{tikzpicture}[scale=1.7]
  \pgfmathsetmacro{\R}{1.5}
  \tikzset{
    greatfrontA/.style={
      thick,
      postaction={decorate},
      decoration={markings,mark=at position 0.6 with {\arrow{>}}}
    },
    greatfrontB/.style={
      thick,
      postaction={decorate},
      decoration={markings,mark=at position 0.9 with {\arrow{>}}}
    }
  }
  \draw[thick] (0,0) circle (\R);
  \pgfmathsetmacro{\phiA}{25}
  \begin{scope}
    \clip (0,0) circle (\R);
    \clip (-2,0) rectangle (2,2); 
    \draw[dashed] (0,0)
      ellipse [x radius=\R, y radius=0.5*\R, rotate=\phiA];
  \end{scope}
  \begin{scope}
    \clip (0,0) circle (\R);
    \clip (-2,-2) rectangle (2,0); 
    \draw[greatfrontA] (0,0)
      ellipse [x radius=\R, y radius=0.5*\R, rotate=\phiA];
  \end{scope}
  \node at (0.6,-0.4) {$a$};
  \pgfmathsetmacro{\phiB}{-25}
  \begin{scope}
    \clip (0,0) circle (\R);
    \clip (-2,0) rectangle (2,2);
    \draw[dashed] (0,0)
      ellipse [x radius=\R, y radius=0.5*\R, rotate=\phiB];
  \end{scope}
  \begin{scope}
    \clip (0,0) circle (\R);
    \clip (-2,-2) rectangle (2,0);
    \draw[greatfrontB] (0,0)
      ellipse [x radius=\R, y radius=0.5*\R, rotate=\phiB];
  \end{scope}
  \node at (-0.6,-0.4) {$b$};
  \pgfmathsetmacro{\angA}{110}
  \pgfmathsetmacro{\angB}{70}

  \coordinate (A) at ({\R*cos(\angA)},{\R*sin(\angA)});
  \coordinate (B) at ({\R*cos(\angB)},{\R*sin(\angB)});

  \fill (A) circle (0.018);
  \fill (B) circle (0.018);

  \node[above] at (A) {$A$};
  \node[above] at (B) {$B$};

\end{tikzpicture}
\caption{The projective duality.}
\label{fig:greatcircle}
\end{figure}

This operation is indeed a duality: one can show that $\gamma^*$ is obtained from $\gamma$ by moving each point a distance of $\pi/2$ in the direction orthogonal to $\gamma,$ and that $(\gamma^*)^*$ is the antipodal curve of $\gamma$ on $S^2.$ Note also the following property: if a north pole $B$ is on the line $a$, the north pole $A$ lies on the line $b$.

Consider a billiard reflection inside $\gamma$, where a ray $a$ reflects off the tangent line $\ell$ at a point $P$ and travels in a new direction $b$. The three lines $a, b, \text{ and } \ell$ all pass through $P,$ so that all three points $A, B, \text{ and } L$ lie on the line $p$. Since $L$ is also on $\gamma^*$, we get $L = p \cap \gamma^*$. Moreover, since the angle of incidence is equal to the angle of reflection, we have $AL = LB$ from the length-angle duality. Therefore, $A, B,$ and $L$ all lie on the same line with $AL = LB$. 

The Birkhoff billiard map taking $a$ to $b$ after a reflection at $P \in \gamma$ corresponds exactly to the outer billiards map taking $A$ to $B$ after reflection about $P^*\in \gamma^*$. This duality holds only in $S^2$: in the plane, there is no direct relation between the systems. For more details, see \cite{Tabachnikov}.

Extending this duality to \textit{refractive inner billiards} defined in \cite{Arnold} gives refractive \textit{outer} billiards. The name stems from the refraction phenomena in optics: for two materials with refractive indices $n$ and $\tilde{n}$, Snell's law states that
\[
\frac{n}{\tilde{n}} = \frac{\sin{\tilde\theta}}{\sin{\theta}},
\] 
where $\theta$ is the angle of incidence and $\tilde{\theta}$ is the angle of refraction. 

Consider a billiard ball moving inside a table $\Gamma$. When the ball collides with the boundary $\partial\Gamma$, the refractive billiards system refracts the ball instead of reflecting it. We are given refractive indices $\lambda_1, \dots,\lambda_k,$ with $\lambda_1\dots\lambda_k = 1.$ For the $i$th refraction, ${\sin{\tilde{\theta}_i}}/{\sin{\theta}_i} = \lambda_i$, where $\theta_i$ is the $i^\text{th}$ angle of incidence and $\tilde{\theta}_i$ the $i^\text{th}$ angle of refraction. Then, we reflect the ray exiting the table back inside. The same projective duality motivates refractive outer billiards.

\begin{thebibliography}{9}


\bibitem{Gutkin}
E. Gutkin, N. Simanyi,
Dual polygonal billiards and necklace dynamics,
{\it Comm. Math. Phys.} {\bf 143} (1992), 431--449.

\bibitem{Culter}
S. Tabachnikov,
A proof of Culter's theorem on the existence of periodic orbits in polygonal outer billiards,
{\it Geom. Dedicata} {\bf 129} (2007), 83--87.

\bibitem{Arnold}
M. Arnold, J. Park, 
Snell meets Fagnano. Path optimization through an imperfect mirror,
available online at the URL: \url{https://arxiv.org/abs/2512.02236}.

\bibitem{Kolodziej}
R. Kolodziej,
The antibilliard outside a polygon,
{\it Bull. Polish Acad. Sci. Math.} {\bf 37} (1989), 163--168.

\bibitem{Moser}
J. Moser,
Is the solar system stable?,
{\it Math. Intelligencer} {\bf 1} (1978), 65--71.

\bibitem{Moser2}
J. Moser,
{\it Stable and random motions in dynamical systems},
Annals of Mathematics Studies, vol.~77,
Princeton Univ. Press, 1973.

\bibitem{Vivaldi}
F. Vivaldi, A. V. Shaidenko,
Global stability of a class of discontinuous dual billiards,
{\it Comm. Math. Phys.} {\bf 110} (1987), 625--640.

\bibitem{Genin}
D. Genin,
{\it Regular and chaotic dynamics of outer billiards},
Ph.D. thesis, Pennsylvania State University, 2005.

\bibitem{Dolgopyat}
D. Dolgopyat, B. Fayad,
Unbounded orbits for semicircular outer billiard,
{\it Ann. Henri Poincaré} {\bf 10} (2009), 357--375.

\bibitem{Schwarz}
R. E. Schwartz,
Unbounded orbits for outer billiards. I,
{\it J. Mod. Dyn.} {\bf 1} (2007), 371--424.

\bibitem{Tabachnikov}
S. Tabachnikov,
{\it Geometry and billiards},
Amer. Math. Soc., 2005.

\bibitem{Boyland}
P. Boyland, Dual Billiards, twist maps, and impact oscillators, {\it Nonlinearity} {\bf 9} (1996), 1411--1438.

\bibitem{Douady}
R. Douady, These de 3-eme cycle, Universite de Paris 7, 1982
\end{thebibliography}
\end{document}